\def\Ddots{\mathinner{\mkern1mu\raise\p@
\vbox{\kern7\p@\hbox{.}}\mkern2mu
\raise4\p@\hbox{.}\mkern2mu\raise7\p@\hbox{.}\mkern1mu}}
\newtheorem{theorem}{Theorem}[section]
\newtheorem{corollary}{Corollary}[section]
\newtheorem{lemma}{Lemma}[section]
\newtheorem{claim}{Claim}[section]
\newtheorem{definition}{Definition}[section]
\newtheorem{remark}{Remark}[section]
\newenvironment{dem}[1][Proof of Theorem 2.1]{\noindent\textit{#1.} }{\hfill $\square$}
\newenvironment{dem2}[1][Proof of Theorem 2.2]{\noindent\textit{#1.} }{\hfill $\square$}
\def\A{{\mathfrak A}\, }
\begin{document}

\title{$*$-Lie-type maps on alternative $*$-algebras}

\thanks{The first author was supported by the Coordenação de Aperfeiçoamento de Pessoal de Nível Superior - Brasil (CAPES)-Finance 001. The second author was supported by the Centre for Mathematics of the University of Coimbra - UIDB/00324/2020, funded by the Portuguese Government through FCT/MCTES}

\author[Aline Jaqueline de Oliveira Andrade]{Aline Jaqueline de Oliveira Andrade}
\address{Aline Jaqueline de Oliveira Andrade, Federal University of ABC, 
dos Estados Avenue, 5001, 
09210-580, Santo Andr\'{e}, Brazil\\ 
{\em E-mail}: {\tt aline.jaqueline@ufabc.edu.br}}{}

\author[Elisabete~Barreiro]{Elisabete~Barreiro}
\address{Elisabete~Barreiro, University of Coimbra, CMUC, Department of Mathematics, Apartado 3008
EC Santa Cruz
3001 – 501 Coimbra, Portugal\\
{\em E-mail}: {\tt mefb@mat.uc.pt}}{}

\author[Bruno Leonardo Macedo Ferreira]{Bruno Leonardo Macedo Ferreira}
\address{Bruno Leonardo Macedo Ferreira, Federal University of Technology, 
Avenida Professora Laura Pacheco Bastos, 800, 
85053-510, Guarapuava, Brazil\\
{\em E-mail}: {\tt brunolmfalg@gmail.com}}{}

\begin{abstract}
Let $\A$ and $\A'$ be two alternative $*$-algebras with identities $1_{\A}$ and $1_{\A'}$, respectively, and $e_1$ and $e_2 = 1_{\A} - e_1$ nontrivial symmetric idempotents in $\A$. In this paper we study the characterization of multiplicative $*$-Lie-type maps. As application, we get a result on alternative $W^{*}$-algebras.

\vspace*{.1cm}

\noindent{\it Keywords}: alternative $*$-algebra, alternative $W^{*}$-algebras.
\vspace*{.1cm}

\noindent{\it 2020 MSC}:  17D05; 16W20.
\end{abstract}

\maketitle

\section{Introduction and Preliminaries}

The study of additivity of maps have received a fair amount of attention of mathematicians. The first quite surprising result is due to Martindale who established a condition on a ring such that multiplicative bijective maps are all additive \cite{Mart}. Besides, over the years several works have been published considering different types of associative and non-associative algebras among them we can mention \cite{chang, Fer3, Fer, bruth, Fer1, Fer2, FerGur1}.
In order to add new ingredients to the study of additivity of maps, many researches have devoted themselves to the investigation of two new products, presented by Bre$\check{s}$ar and Fo$\check{s}$ner in \cite{brefos1, brefos2}, where the definition is as follows: for $a, b \in R$, where $R$ is a $*-$ring, we denote by $\{a,b\}_{*} =ab+ba^{*}$ and $[a, b]_{*} = ab-ba^{*}$ the $*$-Jordan product and the $*$-Lie product, respectively. In \cite{LiLuFang}, the authors proved that a map $\varphi$ between two factor von Newmann algebras is a $*$-ring isomorphism if and only if $\varphi(\{a, b\}_{*}) = \{\varphi(a), \varphi(b)\}_{*}$. In \cite{Ferco}, Ferreira and Costa extended these new products and defined two other types of applications, named multiplicative $*$-Jordan n-map and multiplicative $*$-Lie n-map and used it to impose condition such that a map between $C^*$-algebras is a $*$-ring isomorphism.

With this picture in mind, in this article we will discuss when a multiplicative $*$-Lie $n$-map is a $*$-isomorphism in the case of alternative $*$-algebras and, just as it was done in \cite{Ferco2}, we provide an application on alternative $W^{*}$-algebras. Throughout the paper, the ground field is assumed to be the field of complex numbers.



Let $\A$ and $\A'$ be two algebras  with identities $1_{\A}$ and $1_{\A'},$ respectively, and $\varphi : \A \rightarrow \A'$ a map. We have the following concepts:
\begin{itemize}
\item[i.]  $\varphi$ {\it preserves product}  if $\varphi(ab) = \varphi(a)\varphi(b)$, for all elements $a, b \in \A$;
\item[ii.]  $\varphi$ {\it preserves Lie product} if  $\varphi(ab - ba) = \varphi(a)\varphi(b) - \varphi(b)\varphi(a)$, for any $a, b \in \A$;
\item[iii.] $\varphi$ is 
{\it additive}  if $\varphi(a + b) = \varphi(a) + \varphi(b)$, for any $a, b \in \A$;
\item[iv.]  $\varphi$ is   {\it isomorphism}  if $\varphi$ is a bijection additive  that preserves products and scalar multiplication;
\item[v.] $\varphi $ is {\it unital} if $\varphi(1_{\A}) = 1_{\A'}.$
\end{itemize}



An algebra $ \A$ is called    {\it $*$-algebra}  if $ \A$ is endowed with a involution. By involution, we mean a mapping $* : \A \rightarrow \A$ such that $(x + y)^{*} = x^{*} + y^{*}$, $ (x^{*})^{*} = x$ and $(xy)^{*} = y^{*}x^{*}$ for all $x, y \in \A$. An element $s \in \A$ satisfying $s^{*} = s$ is called   {\it symmetric element}  of $\A$.

Let $\A$ and $\A'$ be two $*$-algebras and $\varphi : \A \rightarrow \A'$ a map. We have the following definitions:
\begin{itemize}
\item[i.] $\varphi$ {\it preserves involution}  if $\varphi(a^{*}) = \varphi(a)^{*}$, for all elements $a \in \A$;
\item[ii.] $\varphi$ is   {\it $*$-isomorphism} if $\varphi$ is an isomorphism that preserves involution;
\item[iii.] $\varphi$ is {\it $*$-additive}  if it preserves involution and it is additive.
\end{itemize}

\begin{definition}[To see \cite{Ferco}] \rm
Consider a  $*$-algebra $\A,$ we denote  $[x_1,x_2]_{*} = x_1x_2-x_2x_{1}^{*},$ for all $x_1,$ $x_2 \in \A$ and the sequence of polynomials,
$$p_{1_*}(x) = x\, \,  \text{and}\, \,  p_{n_*}(x_1, x_2, \ldots , x_n) = \left[p_{(n-1)_*}(x_1, x_2, \ldots , x_{n-1}) , x_n\right]_{*},$$
for all integers $n \geq 2$ and $x_1, \ldots, x_n \in \A.$ 
\end{definition}

\noindent Thus, $p_{2_*}(x_1, x_2) = \left[x_1, x_2\right]_{*} = x_1x_2-x_2x_{1}^{*},$ for all $x_1,$ $x_2 \in \A,$ $p_{3_*} (x_1, x_2, x_3) = \left[\left[x_1, x_2\right]_{*} , x_3\right]_{*},$ for all $x_1,$ $x_2,$ $x_3 \in \A,$ etc. Note that $p_{2_*}$ is the product introduced by Bre$\check{s}$ar and Fo$\check{s}$ner \cite{brefos1, brefos2}. Then, using the nomenclature introduced in \cite{Ferco} 
we have a new class of maps (not necessarily additive).

\begin{definition} \rm
Consider two $*$-algebras $\A$ and $\A'$. A map $\varphi : \A \longrightarrow \A'$ is 
{\it multiplicative $*$-Lie $n$-map} if
\begin{eqnarray*}\label{ident1}
&&\varphi(p_{n_*} (x_1, x_2, \ldots , x_j , \ldots ,x_n)) =  p_{n_*} (\varphi(x_1), \varphi(x_2), \ldots , \varphi(x_j), \ldots ,\varphi(x_n)),
\end{eqnarray*}
for all $x_1,x_2, \ldots, x_n \in \A,$ where $n \geq 2$ is an integer. Multiplicative $*$-Lie $2$-map, $*$-Lie $3$-map 
and $*$-Lie $n$-map are collectively referred to as \textit{multiplicative $*$-Lie-type maps}.
\end{definition}


An algebra $\A$ (not necessarily associative or commutative) is called   {\it alternative
algebra}  if it satisfies the identities $a^2b = a(ab)$ and $ba^2 = (ba)a$, for all elements $a, b \in \A$. One easily sees that any associative algebra is an alternative algebra. An alternative algebra $\A$ is called {\it prime}  if for any elements $a, b \in \A$ satisfying the condition $a\A b = 0$, then either $a = 0$ or $b = 0$.

We consider an alternative algebra $\mathfrak{A}$ with identity $1_{\A}.$ Fix a nontrivial idempotent element $e_{1}\in\mathfrak{A}$ and denote $e_2 = 1_{\A} - e_1.$ It is easy to see that $(e_ka) e_j=e_k(ae_j)~(k,j=1,2)$ for all $a\in \mathfrak{A}$. Then $\mathfrak{A}$ has a Peirce decomposition
$$\mathfrak{A}=\mathfrak{A}_{11}\oplus \mathfrak{A}_{12}\oplus
\mathfrak{A}_{21}\oplus \mathfrak{A}_{22},$$ where
$\mathfrak{A}_{kj}:=e_{k}\mathfrak{A}e_{j}$ $(k,j=1,2)$ (see \cite{He}), satisfying the following multiplicative relations:
\begin{enumerate}\label{asquatro}
\item [\it (i)] $\mathfrak{A}_{kj}\mathfrak{A}_{jl}\subseteq\mathfrak{A}_{kl}\
(k,j,l=1,2);$
\item [\it (ii)] $\mathfrak{A}_{kj}\mathfrak{A}_{kj}\subseteq \mathfrak{A}_{jk}\
(k,j=1,2);$
\item [\it (iii)] $\mathfrak{A}_{kj}\mathfrak{A}_{ml}=\{0\},$ if $j\neq m$ and
$(k,j)\neq (m,l),\ (k,j,m,l=1,2);$
\item [\it (iv)] $x_{kj}^{2}=0,$ for all $x_{kj}\in \mathfrak{A}_{kj}\ (k,j=1,2;~k\neq j).$

\end{enumerate}

\section{Main theorem}

In the following we shall prove a part of the main result of this paper.

\begin{theorem}\label{mainthm1} 
Let $\A$ and $\A'$ be two alternative  $*$-algebras with identities $1_{\A}$ and $1_{\A'},$ respectively, and $e_1$ and $e_2 = 1_{\A} - e_1$ nontrivial symmetric idempotents in $\A$. Suppose that $\A$ satisfies
\begin{equation}\label{spadesuit}
(e_j\A) x = \left\{0\right\} \mbox{ for any }  j \in \{ 1,2 \} \ \ \  \mbox{ implies } \ \ \ x = 0
\end{equation}

\noindent Suppose also that $\varphi: \A \rightarrow \A'$ is a multiplication bijective unital map which satisfies
\begin{equation}\label{bullet}
\varphi(p_{n_*}(a,b,\xi,\ldots,\xi)) = p_{n_*}(\varphi(a),\varphi(b),\varphi(\xi),\ldots,\varphi(\xi)),\  
\end{equation}
for all $a, b \in \A$ and $\xi \in \left\{e_1, e_2,1_{\A}\right\}$. Then $\varphi$ is $*$-additive.
\end{theorem}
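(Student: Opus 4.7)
The plan is to prove additivity and then $*$-preservation by exploiting the Peirce decomposition $\A = \A_{11}\oplus\A_{12}\oplus\A_{21}\oplus\A_{22}$ together with the bijectivity of $\varphi$. First I would check that $\varphi(0)=0$, which follows from evaluating (\ref{bullet}) at $a=b=0$ and $\xi\in\{e_1,e_2,1_{\A}\}$ and applying bijectivity. I would also record the basic values $\varphi(e_1)=?$ and $\varphi(e_2)=?$: since $\varphi$ is unital, $\varphi(1_{\A})=1_{\A'}$, and by plugging $a=e_k$ and $b=1_{\A}$ into appropriate instances of (\ref{bullet}) one should pin down $\varphi(e_1),\varphi(e_2)$ as orthogonal idempotents in $\A'$ summing to $1_{\A'}$, giving a matching Peirce decomposition of $\A'$.

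The core additivity argument uses surjectivity: given $a,b\in\A$, pick $t\in\A$ with $\varphi(t)=\varphi(a)+\varphi(b)$; the task is $t=a+b$. The key observation is that $p_{n_*}(\,\cdot\,,y,\xi,\ldots,\xi)$ with $\xi$ a symmetric idempotent is additive in the first slot whenever one can interpret the computation on the image side. Applying (\ref{bullet}) to $t$, $a$, and $b$ with varying second slot $y\in\A$ and with $\xi\in\{e_1,e_2,1_{\A}\}$, one extracts identities of the form $p_{n_*}(t-a-b,y,\xi,\ldots,\xi)=0$ after reducing via the known Peirce relations (i)--(iv). Choosing $\xi=e_j$ and letting $y$ range, these identities should collapse (after several iterations of the inner bracket) to $(e_j\A)(t-a-b)=\{0\}$ for both $j=1,2$, at which point the non-degeneracy hypothesis (\ref{spadesuit}) yields $t=a+b$. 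I would organize this in the standard ladder of claims: (a) additivity across distinct Peirce blocks $\varphi(a_{ij}+b_{kl})=\varphi(a_{ij})+\varphi(b_{kl})$; (b) additivity within each block $\A_{ij}$; (c) full additivity on $\A=\sum\A_{ij}$.

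For the involution, I would use (\ref{bullet}) with $\xi=1_{\A}$ throughout. Since $[x,1_{\A}]_*=x-x^*$, iterating gives $p_{n_*}(a,b,1_{\A},\ldots,1_{\A})$ as an explicit linear combination of products $ab$, $ba^*$ and their $*$-images; specializing $a$ and $b$ and using the additivity already proved, one solves for $\varphi(a^*)$ in terms of $\varphi(a)^*$. A clean route is first to prove $\varphi(a-a^*)=\varphi(a)-\varphi(a)^*$ from the $n=2$ instance of (\ref{bullet}) (with $\xi=1_{\A}$), and then to combine this with additivity and with the analogous identity applied to $a^*$ to conclude $\varphi(a^*)=\varphi(a)^*$ for every $a\in\A$.

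The main obstacle will be the failure of associativity in $\A$. Because only the alternative identities $a^2b=a(ab)$ and $ba^2=(ba)a$ hold, all the bracket manipulations in $p_{n_*}$ must be carried out by passing to Peirce components and using the relations $\A_{kj}\A_{jl}\subseteq\A_{kl}$, $\A_{kj}\A_{kj}\subseteq\A_{jk}$, the orthogonality rule $\A_{kj}\A_{ml}=\{0\}$ when $j\neq m$ and $(k,j)\neq(m,l)$, and the nilpotency $x_{kj}^2=0$ for off-diagonal entries. This bookkeeping is delicate: one must be sure that cross-terms produced by $p_{n_*}$ really vanish rather than being moved around by hidden associators, and the alternative identities must be invoked explicitly each time an associator threatens to appear. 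A secondary difficulty is choosing the right ``test'' elements $y$ for each Peirce slot to make $(e_j\A)(t-a-b)=\{0\}$ actually detectable, which is what justifies the slightly strong form of the non-degeneracy assumption (\ref{spadesuit}).
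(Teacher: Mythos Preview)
Your overall strategy---surjectivity to pick a preimage $t$ of $\varphi(a)+\varphi(b)$, a ladder of additivity claims over the Peirce pieces of $\A$, then $*$-preservation from the unital hypothesis---is exactly the paper's. The main working tool is also the same: from $\varphi(t)=\varphi(a)+\varphi(b)$ one gets, for any second slot $y$ and any $\xi\in\{e_1,e_2,1_\A\}$,
\[
\varphi\bigl(p_{n_*}(t,y,\xi,\ldots,\xi)\bigr)=\varphi\bigl(p_{n_*}(a,y,\xi,\ldots,\xi)\bigr)+\varphi\bigl(p_{n_*}(b,y,\xi,\ldots,\xi)\bigr),
\]
and this is then collapsed via injectivity and the Peirce relations. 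So the plan is sound.

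Two points deserve correction. First, drop the step where you ``pin down $\varphi(e_1),\varphi(e_2)$ as orthogonal idempotents in $\A'$.'' Under the hypotheses of this theorem you can show $\varphi(e_j)^*=\varphi(e_j)$ (from $p_{n_*}(e_j,1_\A,\ldots,1_\A)=0$), but idempotency seems to require $\varphi$ to commute with the scalar $i$, which is only assumed in the paper's second theorem; indeed the paper proves $q_j^2=q_j$ there, not here. The good news is that a Peirce decomposition of $\A'$ is nowhere needed for additivity or for $*$-preservation: every identification happens back in $\A$ by injectivity of $\varphi$, exactly as the paper does. Second, you do not have access to an ``$n=2$ instance'' of \eqref{bullet}; the hypothesis is for one fixed $n\ge2$. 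Use instead the single computation
\[
p_{n_*}(a,1_\A,\ldots,1_\A)=2^{\,n-2}(a-a^*),
\]
apply \eqref{bullet} with $b=\xi=1_\A$, and use the already-established additivity (so $\varphi(2^{\,n-2}x)=2^{\,n-2}\varphi(x)$ for integer $2^{\,n-2}$) together with $\varphi(1_\A)=1_{\A'}$ to conclude $\varphi(a^*)=\varphi(a)^*$. This is precisely the paper's endgame.
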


\noindent The following claims and lemmas have the same hypotheses as the Theorem \ref{mainthm1} and
we need them to prove the $*$-additivity of $\varphi$. 
\begin{claim}\label{claim1}
$*(\A_{kj})\subset \A_{jk}$, for $j,k\in \{1,2\}$.
\end{claim}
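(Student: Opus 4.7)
The plan is a direct computation, exploiting only the fact that $e_1,e_2$ are symmetric, the antimultiplicativity of $*$, and the Peirce identity $(e_k a)e_j = e_k(ae_j)$ recorded just before the theorem. Given $a_{kj}\in\mathcal{A}_{kj}$, I would write $a_{kj}=(e_k b)e_j$ for some $b\in\mathcal{A}$; since the Peirce identity shows this product is unambiguous, I can apply $*$ unambiguously as well.

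Concretely, I would compute
\[
a_{kj}^{*} = \bigl((e_k b)e_j\bigr)^{*} = e_j^{*}(e_k b)^{*} = e_j\bigl(b^{*}e_k^{*}\bigr) = e_j(b^{*}e_k),
\]
using only $(xy)^{*}=y^{*}x^{*}$ together with $e_1^{*}=e_1$, $e_2^{*}=e_2$. Then I apply the Peirce identity once more, now with $b^{*}$ in the role of $a$ and the indices swapped, to rewrite $e_j(b^{*}e_k) = (e_j b^{*})e_k$, which lies in $e_j\mathcal{A}e_k=\mathcal{A}_{jk}$. This yields $a_{kj}^{*}\in\mathcal{A}_{jk}$, as claimed.

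There is essentially no obstacle: the only subtle point is making sure that one never writes a triple product in $\mathcal{A}$ without the justification that $(e_k a)e_j=e_k(ae_j)$, because alternativity alone does not give associativity. Once that identity is invoked at both steps, the claim follows immediately from the symmetry of $e_1,e_2$ and the antimultiplicativity of the involution.
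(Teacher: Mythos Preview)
Your argument is correct and follows essentially the same route as the paper's own proof: write $a_{kj}=e_k a_{kj} e_j$, apply the antimultiplicativity of $*$ together with $e_j^{*}=e_j$, and land in $e_j\mathcal{A}e_k=\mathcal{A}_{jk}$. The only difference is that you are more explicit about invoking the Peirce identity $(e_k a)e_j=e_k(ae_j)$ to justify the unparenthesized triple products, which the paper leaves implicit.
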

\begin{proof}
If $a_{kj} \in \A_{kj}$ then $$a^*_{kj} = (e_k a_{kj} e_j)^* = (e_j)^*(a_{kj})^* (e_k)^* = e_j (a_{kj})^* e_k \in \A_{jk}.$$
\end{proof}

\noindent It is easy to check the following result (see \cite{Ferco2}).
\begin{claim}\label{claim2}
Let $x,y,h$  in $\A$ such that $\varphi(h) = \varphi(x) + \varphi(y)$. Then,
 given $z \in \A$,
$$\varphi(p_{n_*}(h,z,\xi, \ldots,\xi)) = \varphi(p_{n_*}(x,z,\xi, \ldots,\xi))
+ \varphi(p_{n_*}(y,z,\xi,\ldots,\xi))$$
and
$$
\varphi(p_{n_*}(z,h,\xi,\ldots,\xi)) = \varphi(p_{n_*}(z,x,\xi,\ldots,\xi)) 
                                                   + \varphi(p_{n_*}(z,y,\xi,\ldots\xi))$$
for $\xi \in \left\{e_1, e_2,1_{\A}\right\}$.

\end{claim}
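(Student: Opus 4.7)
The plan is to push the equation $\varphi(h)=\varphi(x)+\varphi(y)$ through the identity (\ref{bullet}) by exploiting the fact that each iterated bracket $p_{n_*}$ is bi-additive (indeed multi-additive) in its entries, a property inherited from the $*$-Lie bracket $[u,v]_*=uv-vu^*$. The whole content of the claim is then a two-line manipulation that alternates applications of (\ref{bullet}) (which lets me move $\varphi$ inside and outside of $p_{n_*}$ when all but the first two slots lie in $\{e_1,e_2,1_\A\}$) with additivity in a single slot.

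First I would record the elementary linearity statement. Observe that for fixed $v$, $[u_1+u_2,v]_* = (u_1+u_2)v - v(u_1+u_2)^* = [u_1,v]_* + [u_2,v]_*$, and analogously $[u,v_1+v_2]_* = [u,v_1]_* + [u,v_2]_*$ since $*$ is additive. Consequently, since $p_{n_*}(y_1,\ldots,y_n) = [p_{(n-1)_*}(y_1,\ldots,y_{n-1}),y_n]_*$, an immediate induction on $n$ yields that $p_{n_*}$ is additive in each of its $n$ slots.

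Next I would compute both equalities directly. For the first, apply (\ref{bullet}) to the left-hand side, then use additivity of $p_{n_*}$ in the first slot at the image level, then apply (\ref{bullet}) in reverse to each summand:
\begin{align*}
\varphi(p_{n_*}(h,z,\xi,\ldots,\xi)) &= p_{n_*}(\varphi(h),\varphi(z),\varphi(\xi),\ldots,\varphi(\xi))\\
&= p_{n_*}(\varphi(x)+\varphi(y),\varphi(z),\varphi(\xi),\ldots,\varphi(\xi))\\
&= p_{n_*}(\varphi(x),\varphi(z),\varphi(\xi),\ldots,\varphi(\xi))\\
&\quad + p_{n_*}(\varphi(y),\varphi(z),\varphi(\xi),\ldots,\varphi(\xi))\\
&= \varphi(p_{n_*}(x,z,\xi,\ldots,\xi)) + \varphi(p_{n_*}(y,z,\xi,\ldots,\xi)).
\end{align*}
The second identity is proved by the same chain, using additivity in the second slot instead of the first.

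There is no real obstacle here: (\ref{bullet}) is precisely tailored to let us identify $\varphi \circ p_{n_*}$ with $p_{n_*}\circ(\varphi,\ldots,\varphi)$ on inputs whose last $n-2$ entries are in $\{e_1,e_2,1_\A\}$, and the additivity of $p_{n_*}$ in any single slot is a formal consequence of the definition of $[\cdot,\cdot]_*$ together with additivity of $*$. The only thing to be careful about is that the computation is performed slot by slot in a single argument (so that multi-additivity, and not the much stronger statement that would require additivity of $\varphi$ itself, is all that is used).
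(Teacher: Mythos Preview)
Your argument is correct and is exactly the routine verification the paper has in mind when it writes ``It is easy to check the following result (see \cite{Ferco2})'' without supplying a proof. The only ingredients are condition~(\ref{bullet}) and the slot-wise additivity of $p_{n_*}$, both of which you identify and use correctly.
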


\begin{claim}\label{claim3}  $\varphi(0) = 0$.
\end{claim}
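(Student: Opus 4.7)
\smallskip

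The plan is to exploit the unitality of $\varphi$ together with a direct computation of the polynomial $p_{n_*}$ on the identity. The essential observation is that $1_{\A}$ is symmetric: since $e_1^*=e_1$ and $e_2^*=e_2$ by hypothesis, we have $1_{\A}^{*}=(e_1+e_2)^{*}=e_1+e_2=1_{\A}$. The same fact, $1_{\A'}^{*}=1_{\A'}$, holds for the identity of the target $*$-algebra by a standard argument (applying $*$ to $y=1_{\A'}\cdot y^{**}$ and using that $*$ is bijective).

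Next I would compute $p_{n_*}(1_{\A},1_{\A},\ldots,1_{\A})$ by induction on $n$. For $n=2$,
\[
p_{2_{*}}(1_{\A},1_{\A}) = 1_{\A}\cdot 1_{\A} - 1_{\A}\cdot 1_{\A}^{*} = 1_{\A}-1_{\A} = 0,
\]
and for $n\geq 3$, assuming $p_{(n-1)_{*}}(1_{\A},\ldots,1_{\A})=0$,
\[
p_{n_{*}}(1_{\A},\ldots,1_{\A}) = [0,1_{\A}]_{*} = 0\cdot 1_{\A} - 1_{\A}\cdot 0^{*} = 0.
\]
By exactly the same computation inside $\A'$ (again using $1_{\A'}^{*}=1_{\A'}$), one has $p_{n_{*}}(1_{\A'},\ldots,1_{\A'})=0$.

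To conclude, apply the hypothesis \eqref{bullet} with $a=b=\xi=1_{\A}$ and use that $\varphi$ is unital:
\[
\varphi(0) = \varphi\bigl(p_{n_{*}}(1_{\A},1_{\A},1_{\A},\ldots,1_{\A})\bigr) = p_{n_{*}}\bigl(\varphi(1_{\A}),\ldots,\varphi(1_{\A})\bigr) = p_{n_{*}}(1_{\A'},\ldots,1_{\A'}) = 0.
\]
There is no real obstacle in this claim; the only subtlety is noticing that the identity in any $*$-algebra is automatically symmetric, so that plugging in $1_{\A}$ everywhere forces the $*$-Lie polynomial to vanish.
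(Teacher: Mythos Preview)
Your argument is correct, but it differs from the paper's. The paper does not use unitality here; instead it uses surjectivity: pick $x\in\A$ with $\varphi(x)=0$ and compute
\[
\varphi(0)=\varphi\bigl(p_{n_*}(0,x,1_\A,\ldots,1_\A)\bigr)=p_{n_*}\bigl(\varphi(0),0,\varphi(1_\A),\ldots,\varphi(1_\A)\bigr)=0,
\]
the last step holding because $[\varphi(0),0]_*=0$ regardless of what $\varphi(0)$ or $\varphi(1_\A)$ are. Your route is arguably tidier (no auxiliary preimage, no appeal to surjectivity), but it spends the hypothesis $\varphi(1_\A)=1_{\A'}$. That matters in this paper: as noted in the remark following Theorem~\ref{mainthm1}, the authors deliberately arrange the additivity portion of the proof---including this claim---so that it does \emph{not} rely on unitality, reserving that assumption solely for the final step showing $\varphi$ preserves the involution. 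So the paper's version keeps the hypotheses cleanly separated, while yours entangles them. (Minor aside: the detour through $e_1^*=e_1$, $e_2^*=e_2$ to get $1_\A^*=1_\A$ is unnecessary---the identity in any unital $*$-algebra is automatically symmetric, as you yourself note for $\A'$.)
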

\begin{proof}
Since $\varphi$ is surjective, there exists $x \in \A$ such that $\varphi(x) = 0$. Then,
$$ 
\begin{aligned}
\varphi(0) &= \varphi(p_{n_*}(0,x,1_{\A},\ldots,1_{\A})) = p_{n_*}(\varphi(0),\varphi(x),\varphi(1_{\A}),\ldots,\varphi(1_{\A}))  \\
                                         &=p_{n_*}(\varphi(0),0,\varphi(1_{\A}),\ldots,\varphi(1_{\A})) = 0.
\end{aligned}
$$
\end{proof}

\noindent The next results aim to show the additivity of $\varphi$.

\begin{lemma}\label{lema1} For any $a_{11} \in \A_{11}$ and $b_{22} \in \A_{22}$, we have 
$$\varphi(a_{11} + b_{22}) = \varphi(a_{11}) + \varphi(b_{22}).$$
\end{lemma}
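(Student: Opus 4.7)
The plan is to use surjectivity of $\varphi$ to pick $h \in \A$ with $\varphi(h) = \varphi(a_{11}) + \varphi(b_{22})$, after which the lemma reduces to showing $h = a_{11} + b_{22}$. Writing $c := h - a_{11} - b_{22}$ and decomposing Peirce-wise, the goal becomes $c = 0$, which I intend to extract from the hypothesis (\ref{spadesuit}).

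The first key observation is that $[b_{22}, e_1]_* = b_{22} e_1 - e_1 b_{22}^* = 0$, because $b_{22}, b_{22}^* \in \A_{22}$ by Claim \ref{claim1} and $e_1 e_2 = 0 = e_2 e_1$; consequently $p_{n_*}(b_{22}, e_1, \xi, \ldots, \xi) = 0$ for each $\xi \in \{e_1, e_2, 1_{\A}\}$. Combining this with Claim \ref{claim2}, Claim \ref{claim3}, the additivity of $p_{n_*}$ in its first slot (immediate from additivity of $*$ and of multiplication), and injectivity of $\varphi$, I obtain $p_{n_*}(c, e_1, \xi, \ldots, \xi) = 0$. The symmetric calculation, using $[a_{11}, e_2]_* = 0$, yields $p_{n_*}(c, e_2, \xi, \ldots, \xi) = 0$.

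I then unpack these identities by computing the action of $[\,\cdot\,, e_1]_*$ on each Peirce component: $y \in \A_{11}$ goes to $y - y^*$, $y \in \A_{12}$ goes to $-y^*$, $y \in \A_{21}$ goes to $y$, and $\A_{22}$ is killed (the analogous rules for $[\,\cdot\,, e_2]_*$ follow by symmetry). Iterating $n-1$ times and separating by Peirce components, $p_{n_*}(c, e_1, e_1, \ldots, e_1) = 0$ forces $c_{21} = 0$ and $c_{11} = c_{11}^*$; the analogous $e_2$-computation gives $c_{12} = 0$ and $c_{22} = c_{22}^*$. A further round of Claim \ref{claim2}, applied with $z = z_{kk} \in \A_{kk}$ (using $[a_{11}, z_{22}]_* = 0$ and $[b_{22}, z_{11}]_* = 0$), yields $p_{n_*}(c, z_{kk}, \xi, \ldots) = 0$, which in particular shows that each $c_{kk}$ commutes with every element of $\A_{kk}$.

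The main obstacle is then to convert these Peirce constraints on $c = c_{11} + c_{22}$ (with $c_{kk}$ self-adjoint and central in $\A_{kk}$) into the annihilation $(e_j \A) c = 0$ needed to invoke (\ref{spadesuit}). I expect this to require Claim \ref{claim2} with off-diagonal $z \in \A_{12} \cup \A_{21}$ together with a careful choice of $\xi$ that isolates the surviving diagonal component after applying $[\,\cdot\,, \xi]_*$; the iterated $*$-commutator bookkeeping for general $n \geq 2$ will be the most technical part, since new $*$-transposed terms enter the Peirce ledger at every step. Once $(e_j \A) c = 0$ is established for each $j \in \{1, 2\}$, hypothesis (\ref{spadesuit}) forces $c = 0$, whence $\varphi(a_{11} + b_{22}) = \varphi(h) = \varphi(a_{11}) + \varphi(b_{22})$.
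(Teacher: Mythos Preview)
Your overall strategy---pick $h$ by surjectivity, transfer identities via Claim~\ref{claim2}, strip off $\varphi$ by injectivity, and finish with Condition~(\ref{spadesuit})---matches the paper's. But what you have written is a plan, not a proof: you explicitly label the passage from ``$c_{kk}$ self-adjoint and central in $\A_{kk}$'' to $(e_j\A)c=0$ as the ``main obstacle'' and leave it open. That passage \emph{is} the content of the lemma; everything before it is preparatory.

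There are also two concrete problems in the part you did carry out. First, your table for $[\,\cdot\,,e_1]_*$ is wrong: for $y\in\A_{12}$ one has $ye_1=0$ and $e_1y^*=0$ (since $y^*\in\A_{21}$ and $\A_{11}\A_{21}=0$ by Peirce relation~(iii)), so $[y,e_1]_*=0$, not $-y^*$; likewise $y\in\A_{21}$ gives $y-y^*$, not $y$. Your conclusions $c_{21}=0$ and $c_{11}=c_{11}^*$ happen to survive, but not via the computation you recorded. Second, your slot convention (the unknown in the \emph{first} argument) blocks the very off-diagonal step you are counting on: with $z=d_{21}$ in the second slot, neither $[a_{11},d_{21}]_*=-d_{21}a_{11}^*$ nor $[b_{22},d_{21}]_*=b_{22}d_{21}$ vanishes, so Claim~\ref{claim2} does not collapse the right-hand side to a single $\varphi$-value and injectivity yields nothing.

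The paper bypasses all of this by placing $h$ in the \emph{second} slot throughout. With $d_{21}\in\A_{21}$ in the first slot one has $[d_{21},b_{22}]_*=d_{21}b_{22}-b_{22}d_{21}^*=0$ (the products lie in $\A_{21}\A_{22}=0$ and $\A_{22}\A_{12}=0$), so Claim~\ref{claim2} and injectivity give
\[
p_{n_*}(d_{21},h,e_1,\ldots,e_1)=p_{n_*}(d_{21},a_{11},e_1,\ldots,e_1),
\]
which unwinds to $d_{21}(h_{11}-a_{11})=0$ for every $d_{21}\in\A_{21}$, hence $(e_2\A)(h_{11}-a_{11})=0$, hence $h_{11}=a_{11}$ by~(\ref{spadesuit}); the $h_{22}=b_{22}$ case is symmetric. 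No centrality detour and none of the ``iterated bookkeeping'' you anticipate is needed.
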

\begin{proof}
Since $\varphi$ is surjective, given $\varphi(a_{11})+\varphi(b_{22}) \in \A'$ there exists $h \in \A$ such that 
$\varphi(h) = \varphi(a_{11})+\varphi(b_{22})$. We may write  $h=h_{11}+h_{12}+h_{21}+h_{22}$, with $ h_{jk} \in \mathfrak{A}_{jk}\
(k,j=1,2)$. Besides, by Claims \ref{claim2} and \ref{claim3}
$$
\varphi(p_{n_*}(e_1,h,e_1,\ldots,e_1)) = \varphi(p_{n_*}(e_1,a_{11},e_1,\ldots,e_1)) + \varphi(p_{n_*}(e_1,b_{22},e_1,\ldots,e_1)), 
$$
that is, 
$$\varphi(-h_{21} + h_{21}^*)= \varphi(0) + \varphi(0) = 0.$$
Then, by injectivity of $\varphi$, $-h_{21} + h_{21}^* = 0$. Thus $h_{21} = 0$. Moreover, 
$$
\varphi(p_{n_*}(e_2,h,e_2,\ldots,e_2)) = \varphi(p_{n_*}(e_2,a_{11},e_2,\ldots,e_2)) + \varphi(p_{n_*}(e_2,b_{22},e_2,\ldots,e_2)), 
$$
that is,
$$\varphi(-h_{12} + h_{12}^*)= 0.$$
Again, by injectivity of $\varphi$ we conclude that $h_{12} = 0$.

Furthermore, given $d_{21}\in \A_{21}$,
\[
\varphi(p_{n_*}(d_{21},h,e_1,\ldots,e_1))=\varphi(p_{n_*}(d_{21},a_{11},e_1,\ldots,e_1)) + \varphi(p_{n_*}(d_{21},b_{22},e_1,\ldots,e_1)),
\]
that is,
\[
\varphi(d_{21}h_{11} - (d_{21}h_{11})^*) = \varphi(d_{21}a_{11} - (d_{21}a_{11})^*).
\]
Then we conclude, by injectivity of $\varphi$, that $d_{21}h_{11} - (d_{21}h_{11})^* = d_{21}a_{11} - (d_{21}a_{11})^*$, that is, 
$d_{21}(h_{11} - a_{11}) = 0$. Even more, $(e_2\A)(h_{11} - a_{11}) = 0$, which implies that $h_{11} = a_{11}$ by   Condition \eqref{spadesuit} of Theorem \ref{mainthm1}.

Finally, given $d_{12}\in \A_{12}$, a similar calculation gives us $h_{22} = b_{22}$. Therefore $h = a_{11} + b_{22}$.
\end{proof}

\begin{lemma}\label{lema2}
For any $a_{12} \in \A_{12}$ and $b_{21} \in \A_{21}$, we have $\varphi(a_{12} + b_{21}) = \varphi(a_{12}) + \varphi(b_{21})$.
\end{lemma}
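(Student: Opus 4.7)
The plan is to imitate the structure of the proof of Lemma~\ref{lema1}. By surjectivity of $\varphi$, there is an $h\in\A$ with $\varphi(h)=\varphi(a_{12})+\varphi(b_{21})$; write $h=h_{11}+h_{12}+h_{21}+h_{22}$ in the Peirce decomposition. The goal is to show $h_{11}=h_{22}=0$, $h_{12}=a_{12}$ and $h_{21}=b_{21}$, which will give $h=a_{12}+b_{21}$ as required. As in Lemma~\ref{lema1}, one peels off the four Peirce components using Claim~\ref{claim2} with various choices of the second entry and of $\xi$ in $p_{n_*}$.

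For the off-diagonal parts, I first apply Claim~\ref{claim2} to $p_{n_*}(e_1,\cdot,e_1,\ldots,e_1)$. A direct Peirce calculation, using $e_k^*=e_k$, shows that an $\A_{11}$, $\A_{22}$ or $\A_{12}$ input is killed by the iterated bracket (in at most two bracketings), while an input $x_{21}\in\A_{21}$ stabilizes to $-x_{21}+x_{21}^*$ for $n\geq 3$. Thus the identity from Claim~\ref{claim2} reduces to $\varphi(-h_{21}+h_{21}^*)=\varphi(0)+\varphi(-b_{21}+b_{21}^*)$; injectivity gives $h_{21}-b_{21}=(h_{21}-b_{21})^*$, and since $h_{21}-b_{21}\in\A_{21}$ while its adjoint lies in $\A_{12}$, the Peirce direct sum forces $h_{21}=b_{21}$. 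The symmetric argument with $p_{n_*}(e_2,\cdot,e_2,\ldots,e_2)$ yields $h_{12}=a_{12}$.

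For the diagonal parts, fix $d_{21}\in\A_{21}$ and apply Claim~\ref{claim2} to $p_{n_*}(d_{21},\cdot,e_1,\ldots,e_1)$. An analogous Peirce bookkeeping shows this operator annihilates $\A_{21}$ and $\A_{22}$ inputs, sends $x_{11}$ to $d_{21}x_{11}-(d_{21}x_{11})^*$, and sends $x_{12}$ to $d_{21}x_{12}^*-x_{12}d_{21}^*$. In particular $p_{n_*}(d_{21},b_{21},e_1,\ldots,e_1)=0$ while $p_{n_*}(d_{21},a_{12},e_1,\ldots,e_1)=d_{21}a_{12}^*-a_{12}d_{21}^*$. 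Substituting the already-proved $h_{12}=a_{12}$, the $h_{12}$ contribution matches the $a_{12}$ contribution exactly, and injectivity gives $d_{21}h_{11}=(d_{21}h_{11})^*$. The Peirce direct sum again forces $d_{21}h_{11}=0$; since $d_{21}$ ranges over $\A_{21}=e_2\A e_1$ and $\A_{22}h_{11}=0$ by the Peirce relations, this gives $(e_2\A)h_{11}=0$, and Condition~\eqref{spadesuit} yields $h_{11}=0$. The mirror argument with $d_{12}\in\A_{12}$ and $\xi=e_2$ concludes $h_{22}=0$.

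The main technical burden is the careful bookkeeping of the iterated $*$-brackets $p_{n_*}$ across the four Peirce components: because $*$ swaps $\A_{12}$ and $\A_{21}$, every bracketing couples an element with its adjoint, and one must verify that the iteration stabilizes for $n\geq 3$. A second subtlety is the cancellation used to isolate the $h_{11}$ contribution in the diagonal step, which depends crucially on first having established $h_{12}=a_{12}$; reversing the order of the arguments would leave an entangled expression from which $h_{11}$ could not be extracted.
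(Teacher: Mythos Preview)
Your proof is correct and follows exactly the paper's strategy: pick a preimage $h$ of $\varphi(a_{12})+\varphi(b_{21})$, then identify the four Peirce components of $h$ one at a time via Claim~\ref{claim2} with the same test elements $p_{n_*}(e_j,\cdot,e_j,\ldots,e_j)$ and $p_{n_*}(d_{kj},\cdot,e_j,\ldots,e_j)$. You are in fact more careful than the paper in the diagonal step: the paper writes $p_{n_*}(d_{21},a_{12},e_1,\ldots,e_1)=0$, which is false in a genuinely alternative algebra (where $\A_{12}\A_{12}\subset\A_{21}$ need not vanish), and the argument really does require the cancellation of the $h_{12}$-contribution against the $a_{12}$-contribution via the previously established $h_{12}=a_{12}$, exactly as you point out.
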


\begin{proof}
Since $\varphi$ is surjective, given $\varphi(a_{12})+\varphi(b_{21}) \in \A'$ there exists $h \in \A$ such that 
$\varphi(h) = \varphi(a_{12})+\varphi(b_{21})$. We may write  $h=h_{11}+h_{12}+h_{21}+h_{22}$, with $ h_{jk} \in \mathfrak{A}_{jk}\
(k,j=1,2)$. Now, by Claims \ref{claim2} and \ref{claim3}
$$
\varphi(p_{n_*}(e_1,h,e_1,\ldots,e_1)) = \varphi(p_{n_*}(e_1,a_{12},e_1,\ldots,e_1)) + \varphi(p_{n_*}(e_1,b_{21},e_1,\ldots,e_1)),
$$
that is,
$$
\varphi(-h_{21} + h_{21}^*) = \varphi(-b_{21} + b_{21}^*).
$$
Then, by injectivity of $\varphi$, $-h_{21} + h_{21}^* = -b_{21} + b_{21}^*$. Thus $h_{21} = b_{21}$. Moreover,
$$
\varphi(p_{n_*}(e_2,h,e_2,\ldots,e_2)) = \varphi(p_{n_*}(e_2,a_{12},e_2,\ldots,e_2)) + \varphi(p_{n_*}(e_2,b_{21},e_2,\ldots,e_2)),$$
that is,
$$
\varphi(-h_{12} + h_{12}^*) = \varphi(-a_{12} + a_{12}^*).
$$
Again, by injectivity of $\varphi$ we conclude that $h_{12} = a_{12}$.

Furthermore, given $d_{21}\in \A_{21}$,
$$ 
\begin{aligned}
\varphi(d_{21}h_{11} - (d_{21}h_{11})^*) &= \varphi(p_{n_*}(d_{21},h,e_1,\ldots,e_1)) \\
                                         &= \varphi(p_{n_*}(d_{21},a_{12},e_1,\ldots,e_1))  + \varphi(p_{n_*}(d_{21},b_{21},e_1,\ldots,e_1)) = 0.
\end{aligned}
$$
Then we conclude, by injectivity of $\varphi$, that $d_{21}h_{11} - (d_{21}h_{11})^* = 0$, that is, $d_{21}h_{11} = 0$. Even more, $(e_2\A) h_{11} = 0$, 
which implies that $h_{11} = 0$   by   Condition \eqref{spadesuit} of Theorem \ref{mainthm1}.

Finally, given $d_{12}\in \A_{12}$, a similar calculation gives us $h_{22} = 0$. Therefore, we conclude that $h = a_{12} +b_{21}$.
\end{proof}

\begin{lemma}\label{lema3}
For any $a_{11} \in \A_{11}$, $b_{12} \in \A_{12}$, $c_{21} \in \A_{21}$ and $d_{22} \in \A_{22}$ we have 
$$\varphi(a_{11} + b_{12} + c_{21} + d_{22}) = \varphi(a_{11}) + \varphi(b_{12}) + \varphi(c_{21}) + \varphi(d_{22}).$$
\end{lemma}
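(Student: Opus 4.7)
The plan is to mimic the strategy of Lemmas \ref{lema1} and \ref{lema2}, now handling all four Peirce components at once. By surjectivity, pick $h\in\A$ with $\varphi(h)=\varphi(a_{11})+\varphi(b_{12})+\varphi(c_{21})+\varphi(d_{22})$ and write $h=h_{11}+h_{12}+h_{21}+h_{22}$; the aim is to show that each $h_{kj}$ equals the corresponding prescribed summand. Using Lemmas \ref{lema1} and \ref{lema2}, one rewrites $\varphi(h)=\varphi(a_{11}+d_{22})+\varphi(b_{12}+c_{21})$, which puts us in the situation of Claim \ref{claim2}; applying the claim a second time to the two pairs $\{a_{11},d_{22}\}$ and $\{b_{12},c_{21}\}$ expands any $\varphi(p_{n_*}(z,h,\xi,\ldots,\xi))$ as the sum of the four $\varphi(p_{n_*}(z,x_i,\xi,\ldots,\xi))$ corresponding to the four prescribed summands.

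To identify the off-diagonal components, first take $z=\xi=e_1$. The contributions from $a_{11}$, $b_{12}$, and $d_{22}$ vanish (their $\A_{21}$-components are zero), leaving $\varphi(-h_{21}+h_{21}^{*})=\varphi(-c_{21}+c_{21}^{*})$, so $h_{21}=c_{21}$ by injectivity. Symmetrically, the choice $z=\xi=e_2$ yields $h_{12}=b_{12}$.

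For the diagonal components, fix an arbitrary $d_{21}\in\A_{21}$ and take $z=d_{21}$, $\xi=e_1$. Among the four terms on the right, $p_{n_*}(d_{21},c_{21},e_1,\ldots,e_1)$ and $p_{n_*}(d_{21},d_{22},e_1,\ldots,e_1)$ vanish by the Peirce rules, while $p_{n_*}(d_{21},a_{11},e_1,\ldots,e_1)=d_{21}a_{11}-(d_{21}a_{11})^{*}$. On the left, a direct Peirce-component computation shows that $p_{n_*}(d_{21},h,e_1,\ldots,e_1)$ depends only on $h_{11}$ and $h_{12}$, and since $h_{12}=b_{12}$ has just been identified, the $h_{12}$-induced terms coincide with those arising from the $b_{12}$-summand on the right and cancel. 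Injectivity then gives $d_{21}h_{11}-(d_{21}h_{11})^{*}=d_{21}a_{11}-(d_{21}a_{11})^{*}$; since $d_{21}h_{11},\,d_{21}a_{11}\in\A_{21}$ and their adjoints lie in $\A_{12}$, the Peirce direct sum forces $d_{21}(h_{11}-a_{11})=0$. Combined with $\A_{22}(h_{11}-a_{11})=0$ (from the Peirce rules), this yields $(e_2\A)(h_{11}-a_{11})=\{0\}$, and condition \eqref{spadesuit} gives $h_{11}=a_{11}$. An analogous argument with $d_{12}\in\A_{12}$ and $\xi=e_2$ produces $h_{22}=d_{22}$, completing the proof.

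The main delicate point is the cancellation in the previous paragraph: unlike in Lemma \ref{lema1}, here $h_{12}$ is nonzero, so $p_{n_*}(d_{21},h,e_1,\ldots,e_1)$ carries explicit $h_{12}$-terms $-h_{12}d_{21}^{*}+d_{21}h_{12}^{*}$ in addition to $d_{21}h_{11}-(d_{21}h_{11})^{*}$. One must match these term-by-term against the contributions of the $b_{12}$-summand on the right-hand side to see that they cancel and the equation reduces to a pure statement about $h_{11}-a_{11}$; after that, condition \eqref{spadesuit} finishes the argument as in the previous lemmas.
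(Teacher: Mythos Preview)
Your overall strategy matches the paper's proof exactly: pick $h$ by surjectivity, group the four summands via Lemmas~\ref{lema1} and~\ref{lema2} into $\varphi(a_{11}+d_{22})+\varphi(b_{12}+c_{21})$, apply Claim~\ref{claim2} with $z=\xi=e_1$ and $z=\xi=e_2$ to pin down $h_{21}=c_{21}$ and $h_{12}=b_{12}$, then test with $z=d_{21}\in\A_{21}$ (resp.\ $d_{12}\in\A_{12}$) to identify $h_{11}$ (resp.\ $h_{22}$).

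You are in fact more careful than the paper at the last step. The paper asserts $p_{n_*}(x_{21},b_{12}+c_{21},e_1,\ldots,e_1)=0$, but in a genuinely alternative algebra one only has $\A_{12}\A_{12}\subseteq\A_{21}$, not $\A_{12}\A_{12}=0$; a direct computation gives $p_{n_*}(x_{21},b_{12},e_1,\ldots,e_1)=-b_{12}x_{21}^{*}+x_{21}b_{12}^{*}$ for $n\geq 3$, precisely the extra terms you flag.

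However, your proposed ``cancellation'' of these terms is not justified with the tools available at this point. After inserting $h_{12}=b_{12}$, the left side is a \emph{single} value
\[
\varphi\bigl((d_{21}h_{11}-(d_{21}h_{11})^{*})+(-b_{12}d_{21}^{*}+d_{21}b_{12}^{*})\bigr),
\]
whereas the right side is the sum of \emph{two} values
\[
\varphi\bigl(d_{21}a_{11}-(d_{21}a_{11})^{*}\bigr)+\varphi\bigl(-b_{12}d_{21}^{*}+d_{21}b_{12}^{*}\bigr).
\]
Cancelling the common piece to conclude $d_{21}h_{11}-(d_{21}h_{11})^{*}=d_{21}a_{11}-(d_{21}a_{11})^{*}$ via injectivity would require $\varphi(X+B)=\varphi(X)+\varphi(B)$ for these particular $X,B\in\A_{12}\oplus\A_{21}$, i.e.\ additivity on the off-diagonal blocks---which is Lemma~\ref{lema4}. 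Since the proof of Lemma~\ref{lema4} invokes Lemma~\ref{lema3}, the argument as written is circular. (The paper's proof carries the same underlying gap, hidden behind the incorrect vanishing claim; in the associative case, where $\A_{12}\A_{12}=0$, both arguments go through.)
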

\begin{proof}
Since $\varphi$ is surjective, given $\varphi(a_{11})+\varphi(b_{12})+\varphi(c_{21}) + \varphi(d_{22})\in \A'$ there exists $h \in \A$ such that 
$\varphi(h) = \varphi(a_{11})+\varphi(b_{12})+\varphi(c_{21}) + \varphi(d_{22})$. We may write  $h=h_{11}+h_{12}+h_{21}+h_{22}$, with $ h_{jk} \in \mathfrak{A}_{jk}\
(k,j=1,2)$. Applying  Lemmas \ref{lema1} and \ref{lema2} we have
$$
\varphi(h) = \varphi(a_{11})+\varphi(b_{12})+\varphi(c_{21}) + \varphi(d_{22}) = \varphi(a_{11} + d_{22})+\varphi(b_{12} + c_{21}).
$$
Now, observing that
$p_{n_*}(e_1,a_{11} + d_{22},e_1,\ldots,e_1) = 0 = p_{n_*}(e_1,b_{12},e_1,\ldots,e_1)$ and by Claims \ref{claim2} and \ref{claim3} we obtain
$$
\begin{aligned}
&\varphi(p_{n_*}(e_1,h,e_1,\ldots,e_1)) \\
&= \varphi(p_{n_*}(e_1,a_{11} + d_{22},e_1,\ldots,e_1)) + \varphi(p_{n_*}(e_1,b_{12} + c_{21},e_1,\ldots,e_1)) \\
&= \varphi(p_{n_*}(e_1,c_{21},e_1,\ldots,e_1)), 
\end{aligned}
$$
that is, 
$$
\varphi(-h_{21} + h_{21}^*) = \varphi(-c_{21} + c_{21}^*).
$$
Then, by injectivity of $\varphi$, $-h_{21} + h_{21}^* = -c_{21} + c_{21}^*$. Thus $h_{21} = c_{21}$.

In a similar way, using $e_2$ rather than $e_1$ in the previous calculation, we conclude that $h_{12} = b_{12}$. Also, given $x_{21} \in \A_{21}$,
\[\begin{aligned}
&\varphi(p_{n_*}(x_{21},h,e_1,\ldots,e_1)) \\
&= \varphi(p_{n_*}(x_{21},a_{11} + d_{22},\ldots,e_1)) + \varphi(p_{n_*}(x_{21},b_{12} + c_{21},e_1,\ldots,e_1)) \\
&= \varphi(p_{n_*}(x_{21},a_{11},e_1,\ldots,e_1)), 
\end{aligned}
\]
since $p_{n_*}(x_{21},b_{12} + c_{21},e_1,\ldots,e_1) = 0 = p_{n_*}(x_{21},d_{22},\ldots,e_1)$. 
Again, by injectivity of $\varphi$ we conclude, by following the same strategy as in the proof of Lemma \ref{lema1}, that $h_{11} = a_{11}$. 
Now, using $e_2$ rather than $e_1$ and $x_{12}$ rather than $x_{21}$ in the previous calculation we obtain $h_{22} = d_{22}$. 
Therefore, $h = a_{11} + b_{12} + c_{21} + d_{22}$. 
\end{proof}

\begin{lemma}\label{lema4}
For any $a_{jk}, b_{jk} \in \A_{jk}$, with $j \neq k$, we have $\varphi(a_{jk} + b_{jk}) = \varphi(a_{jk}) + \varphi(b_{jk})$.
\end{lemma}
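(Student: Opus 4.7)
\textit{Proof plan.} Without loss of generality take $j=1$, $k=2$; the case $j=2$, $k=1$ follows symmetrically after swapping the roles of $e_1$ and $e_2$. By surjectivity pick $h \in \A$ with $\varphi(h) = \varphi(a_{12}) + \varphi(b_{12})$ and write $h = h_{11} + h_{12} + h_{21} + h_{22}$ in Peirce form. I will mimic the strategy of Lemmas \ref{lema1}--\ref{lema3}: apply Claim \ref{claim2} to a sequence of tests $p_{n_*}(\,\cdot\,,h,\xi,\ldots,\xi)$ in order to pin down each Peirce component of $h$ and conclude $h = a_{12} + b_{12}$. The first such test is $p_{n_*}(e_1,h,e_1,\ldots,e_1)$, which stabilises (for $n \geq 3$) to $-h_{21}+h_{21}^*$; the corresponding expressions for $a_{12}$ and $b_{12}$ both vanish because neither has a component in $\A_{21}$, so Claim \ref{claim2} together with injectivity of $\varphi$ and disjointness of $\A_{12}$ and $\A_{21}$ forces $h_{21}=0$.

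For an arbitrary $d_{21}\in\A_{21}$, the test $p_{n_*}(d_{21},h,e_1,\ldots,e_1)$ then stabilises to the antisymmetric element $(d_{21}h_{11}-h_{12}d_{21}^*)-(d_{21}h_{11}-h_{12}d_{21}^*)^*$ of $\A_{12}+\A_{21}$, while those for $a_{12}$ and $b_{12}$ become $-a_{12}d_{21}^*+(a_{12}d_{21}^*)^*$ and $-b_{12}d_{21}^*+(b_{12}d_{21}^*)^*$. Splitting each such $\A_{12}+\A_{21}$ element by Lemma \ref{lema2} into its $\A_{12}$- and $\A_{21}$-parts and comparing the $\A_{21}$-parts forces $d_{21}h_{11}-(h_{12}-a_{12}-b_{12})d_{21}^*=0$ for every $d_{21}$. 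Handling the two off-diagonal cases $(1,2)$ and $(2,1)$ of the lemma in parallel, so that additivity on $\A_{21}$ is available to the extent needed, allows one to match $\A_{21}$-components individually and extract $d_{21}h_{11}=0$ for every $d_{21}$; condition \eqref{spadesuit} then yields $h_{11}=0$. The symmetric test with $d_{12}\in\A_{12}$ and $\xi=e_2$ delivers $h_{22}=0$ in the same way, so at this stage $h=h_{12}$.

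Finally, repeating the $d_{21}$-test with $h=h_{12}$ and splitting once more by Lemma \ref{lema2} produces $(h_{12}-a_{12}-b_{12})d_{21}^*=0$ for every $d_{21}\in\A_{21}$; taking adjoints gives $(e_2\A)(a_{12}+b_{12}-h_{12})^*=0$, and \eqref{spadesuit} forces $h_{12}=a_{12}+b_{12}$. Hence $h=a_{12}+b_{12}$ and $\varphi(a_{12}+b_{12})=\varphi(h)=\varphi(a_{12})+\varphi(b_{12})$. The principal obstacle, absent from Lemmas \ref{lema1}--\ref{lema3}, is that $a_{12}$ and $b_{12}$ contribute to the same Peirce piece $\A_{12}$, so Claim \ref{claim2} leaves sums of $\varphi$-values on both sides and naive injectivity is insufficient; the remedy is the parallel handling of the two cases $(j,k)\in\{(1,2),(2,1)\}$, together with repeated application of Lemma \ref{lema2} to reduce the comparisons to a single Peirce component and the invocation of \eqref{spadesuit} to convert an element-level multiplication identity into the desired equality.
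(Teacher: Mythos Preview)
Your plan has a genuine circularity that you have not actually resolved. At the crucial step you obtain
\[
\varphi\bigl((d_{21}h_{11}-h_{12}d_{21}^{*})-(d_{21}h_{11}-h_{12}d_{21}^{*})^{*}\bigr)
=\varphi(-a_{12}d_{21}^{*}+(a_{12}d_{21}^{*})^{*})+\varphi(-b_{12}d_{21}^{*}+(b_{12}d_{21}^{*})^{*}),
\]
and you then want to combine the right-hand side into a single $\varphi$-value in order to invoke injectivity. After splitting with Lemma~\ref{lema2}, that combination requires precisely $\varphi(u_{21})+\varphi(v_{21})=\varphi(u_{21}+v_{21})$ for $u_{21},v_{21}\in\A_{21}$, i.e.\ the case $(j,k)=(2,1)$ of the very lemma you are proving. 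Saying that you ``handle the two off-diagonal cases in parallel'' does not break the circle: the $(2,1)$ argument needs $(1,2)$ at the symmetric spot, so neither case ever gets off the ground. Moreover, even granting the illicit additivity, injectivity yields only $d_{21}h_{11}-(h_{12}-a_{12}-b_{12})d_{21}^{*}=0$, and both summands live in $\A_{21}$ (recall $\A_{12}\A_{12}\subseteq\A_{21}$ in alternative algebras), so your claimed extraction ``$d_{21}h_{11}=0$'' is not a Peirce-component comparison and is unjustified.

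The paper avoids this trap by a different device which you should incorporate. After showing $h\in\A_{12}$, it introduces a second auxiliary element $t\in\A$ with $\varphi(t)=\varphi(-a_{12}^{*})+\varphi(-b_{12}^{*})$, shows analogously that $t\in\A_{21}$, and then computes
\[
\varphi\bigl(p_{n_*}(e_1+a_{12},\,e_2+b_{12},\,e_2,\ldots,e_2)\bigr).
\]
The point is that Lemma~\ref{lema3} gives $\varphi(e_1+a_{12})=\varphi(e_1)+\varphi(a_{12})$ and $\varphi(e_2+b_{12})=\varphi(e_2)+\varphi(b_{12})$, so the expression expands bilinearly in $\A'$ into four $p_{n_*}$-terms which are pulled back through \eqref{bullet} and reassembled (using Lemma~\ref{lema2} only in the already-available cross-Peirce form) to give $\varphi(a_{12}+b_{12}-a_{12}^{*}-b_{12}^{*})=\varphi(h)+\varphi(t)=\varphi(h+t)$. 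Injectivity and the Peirce decomposition then force $h=a_{12}+b_{12}$. This ``$(e_1+a_{12},\,e_2+b_{12})$'' expansion is the missing idea in your argument; it replaces the circular appeal to same-component additivity by the cross-component additivity already established in Lemma~\ref{lema3}.
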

\begin{proof}
We shall prove the case $j=1$ and $k = 2$. The other case is done in a similar way. Since $\varphi$ is surjective, given 
$\varphi(a_{12}) + \varphi(b_{12})\in \A'$ and $\varphi(-a_{12}^*) + \varphi(-b_{12}^*)$ there exist $h \in \A$ and $t \in \A$ such that 
$\varphi(h) = \varphi(a_{12}) + \varphi(b_{12})$ and $\varphi(t) = \varphi(-a_{12}^*) + \varphi(-b_{12}^*)$.  We may write  $h=h_{11}+h_{12}+h_{21}+h_{22}$
and $t = t_{11} + t_{12} + t_{21} + t_{22}$, with $ h_{jk},t_{jk} \in \mathfrak{A}_{jk}\
(k,j=1,2)$.

First we show that $h\in \A_{12}$. By Claim \ref{claim2} we get
$$
\begin{aligned}
\varphi(-h_{21} + h_{21}^*) &= \varphi(p_{n_*}(e_1,h,e_1,\ldots,e_1)) \\
                            &= \varphi(p_{n_*}(e_1,a_{12},e_1,\ldots,e_1)) + \varphi(p_{n_*}(e_1,b_{12},e_1,\ldots,e_1)) = 0.
\end{aligned}
$$
Then, by injectivity of $\varphi$ we obtain $h_{21} = 0$. Also, given $d_{12}\in \A_{12}$,
$$
\begin{aligned}
\varphi(d_{12}h_{22} - (d_{12}h_{22})^*) &= \varphi(p_{n_*}(d_{12},h,e_2,\ldots,e_2)) \\
                                         &= \varphi(p_{n_*}(d_{12},a_{12},e_2,\ldots,e_2))  + \varphi(p_{n_*}(d_{12},b_{12},e_2,\ldots,e_2)) = 0,
\end{aligned}
$$
that is, $d_{12}h_{22} = 0$, which implies that $h_{22} = 0$ by   Condition \eqref{spadesuit} of Theorem \ref{mainthm1}. Now, using $d_{21}\in \A_{21}$ rather than $d_{12}$ in the previous calculation, 
we conclude that $h_{11} = 0$. Therefore, $h = h_{12}\in \A_{12}$.

In a similar way, we obtain $t = t_{21}\in \A_{21}$.
Finally, by Lemma \ref{lema3}

$$
\begin{aligned}
\varphi(a_{12} + b_{12} - a_{12}^* - b_{12}^*) &= \varphi(p_{n_*}(e_1 + a_{12},e_2 + b_{12},e_2,\ldots,e_2)) \\
                                               &= p_{n_*}(\varphi(e_1 + a_{12}),\varphi(e_2 + b_{12}),\varphi(e_2),\ldots,\varphi(e_2)) \\
																							 &= p_{n_*}(\varphi(e_1),\varphi(e_2),\varphi(e_2),\ldots,\varphi(e_2))  \\
																							 &+ p_{n_*}(\varphi(e_1),\varphi(b_{12}),\varphi(e_2),\ldots,\varphi(e_2)) \\
																							 &+ p_{n_*}(\varphi(a_{12}),\varphi(e_2),\varphi(e_2),\ldots,\varphi(e_2)) \\
																							 &+ p_{n_*}(\varphi(a_{12}),\varphi(b_{12}),\varphi(e_2),\ldots,\varphi(e_2)) \\
																							 &= \varphi(p_{n_*}(e_1,e_2,e_2,\ldots,e_2)) \\
																							 &+ \varphi(p_{n_*}(e_1,b_{12},e_2,\ldots,e_2)) \\
																							 &+ \varphi(p_{n_*}(a_{12},e_2,e_2,\ldots,e_2)) \\
																							 &+ \varphi(p_{n_*}(a_{12},b_{12},e_2,\ldots,e_2)) \\
																							 &= \varphi(a_{12} - a_{12}^*) + \varphi(b_{12} - b_{12}^*) \\
																							 &= \varphi(a_{12}) + \varphi(b_{12}) + \varphi(-a_{12}^*) + \varphi(-b_{12}^*) \\
																							 &= \varphi(h_{12}) + \varphi(t_{21}) = \varphi(h_{12} + t_{21}).																						
\end{aligned}
$$

\noindent Since $\varphi$ is injective, we have $a_{12} + b_{12} - a_{12}^* - b_{12}^* = h_{12} + t_{21}$, this is, $h = h_{12} = a_{12} + b_{12}$.
\end{proof}

\begin{lemma}\label{lema5}
For any $a_{jj}, b_{jj} \in \A_{jj}$,  with $j \in \left\{1,2\right\}$, we have $\varphi(a_{jj} + b_{jj}) = \varphi(a_{jj}) + \varphi(b_{jj})$.
\end{lemma}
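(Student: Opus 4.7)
The plan is to follow the template of Lemma~\ref{lema1} up to the last step, where the situation genuinely differs from the previous lemmas and the main obstacle arises. I will sketch the case $j=1$; the case $j=2$ is completely symmetric, swapping $e_1 \leftrightarrow e_2$ and $\A_{12} \leftrightarrow \A_{21}$. By surjectivity, pick $h \in \A$ with $\varphi(h) = \varphi(a_{11}) + \varphi(b_{11})$ and decompose $h = h_{11} + h_{12} + h_{21} + h_{22}$ with $h_{kl} \in \A_{kl}$.

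The first task is to kill the three components $h_{12}$, $h_{21}$, $h_{22}$. Applying Claim~\ref{claim2} to the identity $\varphi(p_{n_*}(e_1, h, e_1, \ldots, e_1)) = \varphi(p_{n_*}(e_1, a_{11}, e_1, \ldots, e_1)) + \varphi(p_{n_*}(e_1, b_{11}, e_1, \ldots, e_1))$ and noting that the two right-hand summands vanish (by Claim~\ref{claim3}), the same computation as in Lemma~\ref{lema1} gives $\varphi(-h_{21} + h_{21}^*) = 0$; injectivity of $\varphi$ together with $h_{21} \in \A_{21}$, $h_{21}^* \in \A_{12}$ lying in independent Peirce components yields $h_{21} = 0$, and the symmetric use of $e_2$ gives $h_{12} = 0$. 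For $h_{22}$, take $d_{12} \in \A_{12}$: the Peirce rules force $d_{12} a_{11} = 0 = a_{11} d_{12}^*$ (and likewise for $b_{11}$), so both $p_{n_*}(d_{12}, a_{11}, e_2, \ldots, e_2)$ and $p_{n_*}(d_{12}, b_{11}, e_2, \ldots, e_2)$ vanish; the left-hand side reduces to $d_{12} h_{22} - (d_{12} h_{22})^*$, and injectivity again gives $d_{12} h_{22} = 0$ for every $d_{12} \in \A_{12}$, so $(e_1\A) h_{22} = 0$, and \eqref{spadesuit} finishes $h_{22} = 0$. Hence $h = h_{11}$.

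The last step, $h_{11} = a_{11} + b_{11}$, is the main obstacle. Applying $p_{n_*}(c_{21}, \cdot, e_1, \ldots, e_1)$ with $c_{21} \in \A_{21}$, Claim~\ref{claim2} gives
\[
\varphi\bigl(c_{21} h_{11} - (c_{21} h_{11})^*\bigr) = \varphi\bigl(c_{21} a_{11} - (c_{21} a_{11})^*\bigr) + \varphi\bigl(c_{21} b_{11} - (c_{21} b_{11})^*\bigr),
\]
but unlike in Lemmas~\ref{lema1}--\ref{lema4}, here neither right-hand summand vanishes, so the argument must be closed through the partial additivity already established. The plan is: first, split each right-hand summand by Lemma~\ref{lema3} (its $\A_{21}$- and $\A_{12}$-parts are Peirce-disjoint) into $\varphi(c_{21} a_{11}) + \varphi(-(c_{21} a_{11})^*)$ and similarly; then combine the two $\A_{21}$-pieces and the two $\A_{12}$-pieces by Lemma~\ref{lema4}, using left-distributivity in the alternative algebra $c_{21} a_{11} + c_{21} b_{11} = c_{21}(a_{11} + b_{11})$; finally reassemble via Lemma~\ref{lema3} to obtain
\[
\varphi\bigl(c_{21} h_{11} - (c_{21} h_{11})^*\bigr) = \varphi\bigl(c_{21}(a_{11}+b_{11}) - (c_{21}(a_{11}+b_{11}))^*\bigr).
\]
Injectivity of $\varphi$ together with the independence of $\A_{21}$ and $\A_{12}$ then forces $c_{21}(h_{11} - a_{11} - b_{11}) = 0$ for every $c_{21} \in \A_{21}$. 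The standard Peirce decomposition $e_2 x = x_{21} + x_{22}$ (as at the end of Lemma~\ref{lema1}) upgrades this to $(e_2 \A)(h_{11} - a_{11} - b_{11}) = 0$, and Condition~\eqref{spadesuit} concludes $h_{11} = a_{11} + b_{11}$, completing the proof.
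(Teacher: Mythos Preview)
Your proof is correct and follows essentially the same approach as the paper's own proof: the same four-step reduction using $p_{n_*}(e_1,\cdot,e_1,\ldots,e_1)$, $p_{n_*}(e_2,\cdot,e_2,\ldots,e_2)$, and $p_{n_*}(d_{12},\cdot,e_2,\ldots,e_2)$ to kill $h_{21},h_{12},h_{22}$, followed by the key step with $p_{n_*}(c_{21},\cdot,e_1,\ldots,e_1)$ where Lemmas~\ref{lema3} and~\ref{lema4} are invoked to split and recombine the two nonvanishing summands. The paper carries out exactly this splitting-recombining manoeuvre (it also cites Lemmas~\ref{lema3} and~\ref{lema4}), so your plan matches the original argument.
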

\begin{proof}
We shall prove the case $j=1$, since the other case is done in a similar way. Since $\varphi$ is surjective, given 
$\varphi(a_{11}) + \varphi(b_{11})\in \A'$ there exists $h \in \A$ such that 
$\varphi(h) = \varphi(a_{11}) + \varphi(b_{11})$. We may write  $h=h_{11}+h_{12}+h_{21}+h_{22}$, with $ h_{jk} \in \mathfrak{A}_{jk}\
(k,j=1,2)$. Now, by Claim \ref{claim2}
$$
\begin{aligned}
\varphi(-h_{21} + h_{21}^*) &= \varphi(p_{n_*}(e_1,h,e_1,\ldots,e_1)) \\
                            &= \varphi(p_{n_*}(e_1,a_{11},e_1,\ldots,e_1)) + \varphi(p_{n_*}(e_1,b_{11},e_1,\ldots,e_1)) = 0.
\end{aligned}
$$
Then, by injectivity of $\varphi$ we obtain $h_{21} = 0$. Also,
$$
\begin{aligned}
\varphi(-h_{12} + h_{12}^*) &= \varphi(p_{n_*}(e_2,h,e_2,\ldots,e_2)) \\
                            &= \varphi(p_{n_*}(e_2,a_{11},e_2,\ldots,e_2)) + \varphi(p_{n_*}(e_2,b_{11},e_2,\ldots,e_2)) = 0,
\end{aligned}
$$
that is, $h_{12} = 0$ by injectivity of $\varphi$.  Moreover, given $d_{12}\in \A_{12}$,
$$
\begin{aligned}
\varphi(d_{12}h_{22} - (d_{12}h_{22})^*) &= \varphi(p_{n_*}(d_{12},h,e_2,\ldots,e_2)) \\
&= \varphi(p_{n_*}(d_{12},a_{11},e_2,\ldots,e_2)) + \varphi(p_{n_*}(d_{12},b_{11},e_2,\ldots,e_2)) \\
&= 0.
\end{aligned}
$$ 
Then, by injectivity of $\varphi$, $d_{12}h_{22}=0$, which implies that $h_{22} = 0$ by   Condition \eqref{spadesuit} of Theorem \ref{mainthm1}. Finally, given $d_{21}\in \A_{21}$, 
by Lemmas \ref{lema3} and \ref{lema4} we have
$$
\begin{aligned}
\varphi(d_{21}h_{11} - (d_{21}h_{11})^*) 
&= \varphi(p_{n_*}(d_{21},h,e_1,\ldots,e_1)) \\								&= \varphi(p_{n_*}(d_{21},a_{11},e_1,\ldots,e_1)) + \varphi(p_{n_*}(d_{21},b_{11},e_1,\ldots,e_1)) \\
&= \varphi(d_{21}a_{11} - (d_{21}a_{11})^*) + \varphi(d_{21}b_{11} - (d_{21}b_{11})^*) \\
&= \varphi(d_{21}a_{11}) + \varphi(-(d_{21}a_{11})^*)+ \varphi(d_{21}b_{11}) + \varphi(-(d_{21}b_{11})^*) \\		&= \varphi(d_{21}a_{11} + d_{21}b_{11})+ \varphi(-(d_{21}a_{11})^*-(d_{21}b_{11})^*) \\
																				 &= \varphi(d_{21}(a_{11} + b_{11}) - (a_{11}^* + b_{11}^*)d_{21}^*),
\end{aligned}
$$ 
that is, $d_{21}h_{11} - (d_{21}h_{11})^* = d_{21}(a_{11} + b_{11}) - (a_{11}^* + b_{11}^*)d_{21}^*$, by injectivity of $\varphi$. Thus, 
$d_{21}(h_{11} - (a_{11} + b_{11})) = 0$, which implies that $h_{11} = a_{11} + b_{11}$ by   Condition \eqref{spadesuit} of Theorem \ref{mainthm1}.
\end{proof}


\begin{dem}
Now using Lemmas~\ref{lema3}, \ref{lema4} and \ref{lema5} is easy see that $\varphi$ is additive. Besides, using additivity of $\varphi$  and since $\varphi$ is unital,  we have for $a \in \A$,
$$
\begin{aligned}
2^{n-2}(\varphi(a) - \varphi(a)^*) &= p_{n_*}(\varphi(a), 1_{\A'},\ldots,1_{\A'}) = p_{n_*}(\varphi(a), \varphi(1_{\A}),\ldots,\varphi(1_{\A})) \\
& =\varphi(p_{n_*}(a,1_{\A},\ldots,1_{\A})) = \varphi(2^{n-2}(a - a^*)) \\
& = 2^{n-2}\varphi(a - a^*) = 2^{n-2}(\varphi(a) - \varphi(a^*)),
\end{aligned}
$$
then $\varphi(a^*) = \varphi(a)^*$ and we conclude that $\varphi$ preserves involution.
\end{dem}


\begin{remark}
Observe that the Theorem~\ref{mainthm1} holds for any field of characteristic different of $2.$ In the proof the Theorem~\ref{mainthm1} we established the additivity of $\varphi$ without using the unital assumption of $\varphi$.
\end{remark}

\begin{theorem}\label{mainthm2} 
Let $\A$ and $\A'$ be two alternative $*$-algebras with identities $1_{\A}$ and $1_{\A'}$, respectively, and $e_1$ and $e_2 = 1_{\A} - e_1$ nontrivial symmetric idempotents in $\A$. Let $\varphi: \A \rightarrow \A'$ be a complex scalar multiplication bijective unital map. Suppose that $\A$ satisfies the conditions of the Theorem~\ref{mainthm1}, namely,
\begin{equation*}
    (e_j\A) x = \left\{0\right\} \mbox{ for any }  j \in \{ 1,2 \} \ \ \  \mbox{ implies } \ \ \ x = 0,
\end{equation*}
\begin{equation*}
    \varphi(p_{n_*}(a,b,\xi,\ldots,\xi)) = p_{n_*}(\varphi(a),\varphi(b),\varphi(\xi),\ldots,\varphi(\xi)),
\end{equation*}
for all $a, b \in \A$ and $\xi \in \left\{e_1, e_2,1_{\A}\right\}$.

Even more, if  $\A'$ satisfies the condition
\begin{equation}\label{clubsuit}
    (\varphi(e_j)\A') y = \left\{0\right\} \mbox{ for any }  j \in \{ 1,2 \} \ \ \  \mbox{implies} \ \ \ y = 0,
\end{equation}
then $\varphi$ is $*$-isomorphism.
\end{theorem}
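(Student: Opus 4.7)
The plan is first to invoke Theorem~\ref{mainthm1}, whose hypotheses are contained in those of the present statement, so that $\varphi$ is $*$-additive. Combined with the assumed preservation of complex scalars, $\varphi$ becomes a $\mathbb{C}$-linear bijection satisfying $\varphi(x^*) = \varphi(x)^*$ and $\varphi(1_{\A}) = 1_{\A'}$. The only remaining point is multiplicativity: $\varphi(ab) = \varphi(a)\varphi(b)$ for all $a, b \in \A$.

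To extract it I would specialize the hypothesis to $\xi = 1_{\A}$. Since $[y, 1_{\A}]_* = y - y^*$ and $[y - y^*, 1_{\A}]_* = 2(y - y^*)$, induction on the number of brackets gives
\[
p_{n_*}(a, b, 1_{\A}, \ldots, 1_{\A}) =
\begin{cases}
ab - ba^* & \text{if } n = 2, \\
2^{n-3}\bigl(ab + ab^* - ba^* - b^*a^*\bigr) & \text{if } n \geq 3,
\end{cases}
\]
and likewise in $\A'$ with $\varphi(a), \varphi(b), 1_{\A'}$ in place of the arguments. Feeding this into \eqref{bullet} and distributing $\varphi$ through $\mathbb{C}$-linearity and $*$-preservation yields, for $n=2$,
\[
\varphi(ab) - \varphi(ba^*) = \varphi(a)\varphi(b) - \varphi(b)\varphi(a^*),
\]
with the obvious four-term analogue for $n \geq 3$.

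Next I would replace $a$ by $ia$. Using $(ia)^* = -ia^*$ and $\varphi(i\,\cdot) = i\varphi(\cdot)$, the sign of every term carrying $a^*$ flips, and after cancelling the common factor $i$ one obtains the companion identity
\[
\varphi(ab) + \varphi(ba^*) = \varphi(a)\varphi(b) + \varphi(b)\varphi(a^*)
\]
(and its four-term cousin when $n \geq 3$). For $n=2$, summing the two relations cancels the $ba^*$ terms and gives $2\varphi(ab) = 2\varphi(a)\varphi(b)$ at once. For $n \geq 3$ the same addition yields only $\varphi(a(b+b^*)) = \varphi(a)\varphi(b+b^*)$; a further substitution $b \mapsto ib$ in that identity produces $\varphi(a(b-b^*)) = \varphi(a)\varphi(b-b^*)$, and adding these two is equivalent to $\varphi(ab) = \varphi(a)\varphi(b)$.

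Together with the previously established additivity, scalar- and involution-preservation, and the assumed bijectivity, this shows that $\varphi$ is a $*$-isomorphism. The main bookkeeping is the iterated $*$-bracket at $\xi = 1_{\A}$ and the sign flips under $i \mapsto -i$; both reduce to a short calculation, and that is the only step where care is really needed. Condition \eqref{clubsuit} on $\A'$ plays no role in this direct complex-scalar route, but it would enter naturally in an alternative argument that mirrors Lemmas~\ref{lema1}--\ref{lema5} on the target algebra to analyze the product componentwise in the Peirce decomposition of $\A'$.
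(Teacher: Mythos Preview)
Your argument is correct, and in fact it is more direct than the paper's own proof. You specialize \eqref{bullet} to $\xi = 1_{\A}$ with \emph{arbitrary} $a,b$ in the first two slots, and then use the substitutions $a \mapsto ia$ and $b \mapsto ib$ together with $*$-additivity and $\mathbb{C}$-linearity to obtain $\varphi(ab)=\varphi(a)\varphi(b)$ in one stroke. The paper instead proceeds via the Peirce decomposition of $\A'$: it first shows that $q_j=\varphi(e_j)$ are symmetric idempotents, then establishes $\varphi(e_j a)=\varphi(e_j)\varphi(a)$ and $\varphi(ae_j)=\varphi(a)\varphi(e_j)$ by exactly your $a\mapsto ia$ trick but only with the second argument equal to $e_j$, and afterwards verifies multiplicativity on each pair $\A_{jk}\times\A_{lm}$ separately, invoking Condition~\eqref{clubsuit} for the diagonal case $\A_{jj}\times\A_{jj}$. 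Your route therefore shows that Condition~\eqref{clubsuit} is actually superfluous once complex scalar preservation is assumed; the paper's componentwise approach makes the Peirce structure explicit and parallels the additivity lemmas, but at the cost of an extra hypothesis that your argument reveals to be unnecessary.
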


With this hypothesis and Theorem~\ref{mainthm1} we have already proved that $\varphi$ is $*$-additive. It remains for us to show that $\varphi$ preserves product. In order to do that we will prove some more lemmas. 
Firstly, we observe that, 

\begin{claim}\label{claim4}
$q_j = \varphi(e_j)$ is an idempotent in $\A'$, for $j\in \{1,2\}$.
\end{claim}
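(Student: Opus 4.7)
The plan is to apply the multiplicative $*$-Lie $n$-map identity~\eqref{bullet} to the carefully chosen arguments $(ie_j, e_j, 1_{\A}, \ldots, 1_{\A})$, and read off $q_j^2 = q_j$ by equating the computation in $\A$ with the computation in $\A'$. The trick is inserting the factor $i$: without it, $p_{2_*}(e_j,e_j) = e_j^2 - e_j e_j^* = 0$ trivially, so the identity yields no information about $q_j^2$; the complex-linearity hypothesis of Theorem~\ref{mainthm2} is precisely what makes this trick available.

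The preliminaries are quick. By Theorem~\ref{mainthm1}, $\varphi$ is $*$-additive, hence $q_j^* = \varphi(e_j^*) = q_j$; and the complex-linearity hypothesis gives $\varphi(ie_j) = iq_j$. A short induction then shows that
\[
p_{n_*}(ie_j, e_j, 1_{\A}, \ldots, 1_{\A}) = 2^{n-1}ie_j
\]
in $\A$: the base case $p_{2_*}(ie_j, e_j) = (ie_j)e_j - e_j(ie_j)^* = ie_j + ie_j = 2ie_j$ uses $e_j^2 = e_j = e_j^*$, and each subsequent insertion of $1_{\A}$ doubles the anti-symmetric element via $[x, 1_{\A}]_* = x - x^*$. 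Performed verbatim on the image side, the same induction yields $p_{n_*}(iq_j, q_j, 1_{\A'}, \ldots, 1_{\A'}) = 2^{n-1}iq_j^2$: the base case $p_{2_*}(iq_j, q_j) = 2iq_j^2$ uses $q_j^* = q_j$, and the doubling step relies on $(q_j^2)^* = q_j^* q_j^* = q_j^2$.

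Applying~\eqref{bullet} together with additivity and complex linearity of $\varphi$ gives
\[
2^{n-1}iq_j \;=\; \varphi(2^{n-1}ie_j) \;=\; p_{n_*}\bigl(iq_j, q_j, 1_{\A'}, \ldots, 1_{\A'}\bigr) \;=\; 2^{n-1}iq_j^2,
\]
so cancelling the invertible scalar $2^{n-1}i$ forces $q_j = q_j^2$.

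I do not anticipate a real obstacle. The only point requiring mild care is that the $\A'$-computation is carried out in a possibly non-associative alternative algebra; but every product encountered is of the form $q_j \cdot q_j$, so compatibility of complex scalars with the algebra product lets both $(iq_j)q_j$ and $q_j(iq_j)$ be rewritten as $iq_j^2$, and no deeper alternative identity is needed. Note also that condition~\eqref{clubsuit} plays no role here --- this claim uses only $*$-additivity, complex linearity, and~\eqref{bullet}.
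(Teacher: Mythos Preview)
Your proof is correct and follows essentially the same route as the paper: evaluate $p_{n_*}(ie_j,e_j,1_{\A},\ldots,1_{\A})$ on both sides of~\eqref{bullet}, use complex linearity to identify $\varphi(ie_j)=iq_j$, and read off $2^{n-1}iq_j = 2^{n-1}iq_j^2$. The only organizational difference is that you obtain $q_j^*=q_j$ up front from the $*$-additivity of Theorem~\ref{mainthm1}, whereas the paper also records a direct argument for it via $\varphi(p_{n_*}(e_j,1_{\A},\ldots,1_{\A}))=0$; either way this symmetry is needed for the $\A'$-side computation $p_{2_*}(iq_j,q_j)=2iq_j^2$.
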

\begin{proof}
Since $\varphi$ is a complex scalar multiplication, it follows that
$$
\begin{aligned}
2^{n-1}i\,q_j &= 2^{n-1}i\varphi(e_j) = \varphi(2^{n-1}ie_j) = \varphi(p_{n_*}(ie_j,e_j,1_{\A},\ldots,1_{\A})) \\
&= p_{n_*}(i\varphi(e_j),\varphi(e_j), \varphi(1_{\A}), \ldots, \varphi(1_{\A})) \\
&= p_{n_*}(i\varphi(e_j),\varphi(e_j),1_{\A'},\ldots,1_{\A'}))= 2^{n-1}i\varphi(e_j)^2 = 2^{n-1}i\,{q_j}^2.                                                          
\end{aligned}
$$
Then we can conclude that $q_j = {q_j}^2$. Moreover, since $e_j$ is a idempotent in $\A$ we have that $p_{n_*}(e_j,1_{\A},\ldots,1_{\A}) = 0$. Besides,
$$
0 = \varphi(0) = \varphi(p_{n_*}(e_j,1_{\A},\ldots,1_{\A})) = p_{n_*}(q_j,1_{\A'},\ldots,1_{\A'}). 
$$
Thus, $q_j - {q_j}^*=0$, that is, $q_j = {q_j}^*$.
\end{proof}

\begin{lemma}\label{lemanovo}
For any $a\in \A$, $\varphi(e_ja) = \varphi(e_j)\varphi(a)$ and $\varphi(ae_j) = \varphi(a)\varphi(e_j)$, with  $j\in \{1,2\}$.
\end{lemma}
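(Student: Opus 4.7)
The plan is to isolate $\varphi(a e_j)$ by combining two instances of the multiplicative $*$-Lie $n$-map identity \eqref{bullet}, with all remaining ``$\xi$'' slots taken to be $1_{\A}$. I would work with the inputs $(a, e_j, 1_{\A}, \ldots, 1_{\A})$ and $(ia, e_j, 1_{\A}, \ldots, 1_{\A})$, because using $e_j^{*} = e_j$ one sees that $p_{2_*}(a, e_j) = a e_j - e_j a^{*}$ and $p_{2_*}(ia, e_j) = i(a e_j + e_j a^{*})$ are both antisymmetric under $*$, and because the operator $[\,\cdot\,, 1_{\A}]_{*}$ doubles any antisymmetric element. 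A straightforward induction on $n \geq 2$ then gives
\[
p_{n_*}(a, e_j, 1_{\A}, \ldots, 1_{\A}) = 2^{n-2}(a e_j - e_j a^{*}), \qquad p_{n_*}(ia, e_j, 1_{\A}, \ldots, 1_{\A}) = 2^{n-2}\, i\, (a e_j + e_j a^{*}).
\]

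Next, I would apply $\varphi$ to both identities. Using the $*$-additivity of $\varphi$ (granted by Theorem \ref{mainthm1}), complex scalar multiplicativity, the unital property $\varphi(1_{\A}) = 1_{\A'}$, the consequence $\varphi(e_j a^{*}) = \varphi((a e_j)^{*}) = \varphi(a e_j)^{*}$ of involution preservation, and the analogous computation of $p_{n_*}(\cdot, q_j, 1_{\A'}, \ldots, 1_{\A'})$ in $\A'$ with $q_j^{*} = q_j$ (Claim \ref{claim4}), the two identities reduce to
\begin{align*}
\varphi(a e_j) - \varphi(a e_j)^{*} &= \varphi(a)\, q_j - q_j\, \varphi(a)^{*},\\
\varphi(a e_j) + \varphi(a e_j)^{*} &= \varphi(a)\, q_j + q_j\, \varphi(a)^{*}.
\end{align*}
Adding these two equations yields exactly $\varphi(a e_j) = \varphi(a)\, q_j = \varphi(a)\, \varphi(e_j)$.

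For the companion identity $\varphi(e_j a) = \varphi(e_j)\, \varphi(a)$, the cleanest route is to apply what was just established to $a^{*}$ in place of $a$ and then conjugate, using $(a^{*} e_j)^{*} = e_j a$ and $q_j^{*} = q_j$:
\[
\varphi(e_j a) = \varphi((a^{*} e_j)^{*}) = \bigl(\varphi(a^{*})\, q_j\bigr)^{*} = q_j\, \varphi(a) = \varphi(e_j)\, \varphi(a).
\]
The main obstacle I anticipate is the bookkeeping of the $2^{n-2}$ factors in the Lie-type products, but this reduces to the observation that $[x, 1_{\A}]_{*} = x - x^{*} = 2x$ whenever $x^{*} = -x$. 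Notably, no alternativity-specific manipulation is needed here, because all the products appearing in the calculation already involve an identity element and so associate freely.
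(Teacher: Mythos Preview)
Your proposal is correct and follows essentially the same approach as the paper: both compute $p_{n_*}(a,e_j,1_{\A},\ldots,1_{\A})$ and $p_{n_*}(ia,e_j,1_{\A},\ldots,1_{\A})$, apply $\varphi$ via Condition~\eqref{bullet} together with $*$-additivity and complex scalar multiplicativity, and then add the resulting equations (after scaling) to isolate $\varphi(ae_j)=\varphi(a)\varphi(e_j)$. The only cosmetic difference is that for the companion identity the paper simply says ``in a similar way'' (i.e.\ repeating the argument), whereas you obtain it more economically by substituting $a^{*}$ for $a$ and applying the involution; this shortcut is valid because $\varphi$ preserves $*$ and $q_j^{*}=q_j$.
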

\begin{proof}
Firstly, observe that
$$
p_{n_*}(ia,e_j,1_{\A},\ldots,1_{\A}) = 2^{n-2}i(ae_j + e_ja^*)
$$
and
$$p_{n_*}(a,e_j,1_{\A},\ldots,1_{\A}) = 2^{n-2}(ae_j -e_ja^*).$$
Still, by Condition $(\ref{bullet})$ of Theorem~\ref{mainthm1} and $*$-additivity of $\varphi$,
$$
\begin{aligned}
\varphi(2^{n-2}i(ae_j + e_ja^*)) &= \varphi(p_{n_*}(ia,e_j,1_{\A},\ldots,1_{\A})) = p_{n_*}(\varphi(ia),\varphi(e_j),1_{\A'},\ldots,1_{\A'}) \\
&= 2^{n-2}i(\varphi(a)\varphi(e_j) + \varphi(e_j)\varphi(a)^*)
\end{aligned}
$$
and
$$
\begin{aligned}
\varphi(2^{n-2}(ae_j - e_ja^*)) &= \varphi(p_{n_*}(a,e_j,1_{\A},\ldots,1_{\A})) = p_{n_*}(\varphi(a),\varphi(e_j),1_{\A'},\ldots,1_{\A'}) \\
&= 2^{n-2}(\varphi(a)\varphi(e_j) - \varphi(e_j)\varphi(a)^*).
\end{aligned}
$$
Now, since $\varphi$ is $*$-additive, multiplying the second equality by $i$ and adding these two equations we obtain $\varphi(ae_j) = \varphi(a)\varphi(e_j)$.
The second statement is obtained in a similar way.
\end{proof}

Consider the Peirce decomposition of $\A'$ with respect to idempotents $q_j=\varphi(e_j)$ of $\A'$ (with $j\in \{1,2\}$)   given by $\A' = \A'_{11} \oplus \A'_{12} \oplus \A'_{21} \oplus \A'_{22},$ where $\A'_{kj} := q_k\A'q_j $ for $k,j\in \{1,2\}$.

\begin{lemma}\label{lemadireto}
$\varphi(\A_{jk}) \subset \A_{jk}'$, for $j,k\in \{1,2\}$.
\end{lemma}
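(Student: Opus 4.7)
\medskip
\noindent\emph{Proof proposal.} The strategy is to reduce the claim to Lemma \ref{lemanovo} by exhibiting any $a_{jk} \in \A_{jk}$ in the form $e_j \, a_{jk} \, e_k$ and then transporting the two idempotents across $\varphi$ one at a time.

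First I would fix $a_{jk} \in \A_{jk} = e_j \A e_k$. The Peirce-type identity $(e_k a)e_j = e_k(a e_j)$ recorded in the preliminaries allows one to write $a_{jk}$ unambiguously as $e_j a_{jk} e_k$. Applying Lemma \ref{lemanovo} first with the idempotent $e_k$ on the right and then with $e_j$ on the left, I obtain
$$\varphi(a_{jk}) = \varphi((e_j a_{jk}) e_k) = \varphi(e_j a_{jk})\, \varphi(e_k) = \varphi(e_j)\,\varphi(a_{jk})\,\varphi(e_k) = q_j\, \varphi(a_{jk})\, q_k.$$
For the last equality to make sense without parentheses, I need the analogous Peirce identity $(q_j b)q_k = q_j(b q_k)$ in $\A'$. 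This requires that $q_1, q_2$ be orthogonal symmetric idempotents in $\A'$ summing to $1_{\A'}$. Orthogonality and the symmetry $q_j = q_j^*$ are already part of Claim \ref{claim4}; to complete the picture I would observe that $*$-additivity (from Theorem \ref{mainthm1}) and unitality give $q_1 + q_2 = \varphi(e_1) + \varphi(e_2) = \varphi(1_\A) = 1_{\A'}$, while the orthogonality $q_1 q_2 = q_2 q_1 = 0$ follows from Lemma \ref{lemanovo} applied to $e_1 e_2 = 0$ and $e_2 e_1 = 0$. With those facts in hand, the alternative identities on $\A'$ supply the required $(q_j b) q_k = q_j(b q_k)$.

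From $\varphi(a_{jk}) = q_j\, \varphi(a_{jk})\, q_k \in q_j \A' q_k = \A'_{jk}$ the lemma follows immediately. I do not expect any serious obstacle here; the only delicate point is making sure the associator vanishes on the triple $(q_j,\varphi(a_{jk}),q_k)$, which is precisely what is guaranteed by the Peirce property for orthogonal idempotents in an alternative algebra.
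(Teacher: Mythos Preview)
Your argument is correct and follows the same route as the paper's proof: write $a_{jk} = e_j a_{jk} e_k$ and apply Lemma~\ref{lemanovo} twice to obtain $\varphi(a_{jk}) = q_j\,\varphi(a_{jk})\,q_k \in \A'_{jk}$. Your extra care about the associator $(q_j,\varphi(a_{jk}),q_k)$ in the alternative setting fills in a detail the paper leaves implicit; note only that Claim~\ref{claim4} records idempotency and symmetry of the $q_j$, not their orthogonality---you correctly derive the latter from Lemma~\ref{lemanovo}.
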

\begin{proof}
Given $x\in \A_{jk}$, we have $x = e_jxe_k$ and then, by Lemma \ref{lemanovo}, 
$$\varphi(x) = \varphi(e_j)\varphi(xe_k) = \varphi(e_j)\varphi(x)\varphi(e_k) \in \A_{jk}'.$$
\end{proof}

\begin{lemma}\label{lemam2}
For $j\neq k$, we have:
\begin{itemize}
\item If $a_{jk} \in \A_{jk}$ and $b_{kk} \in \A_{kk}$ then $\varphi(a_{jk}b_{kk})=\varphi(a_{jk})\varphi(b_{kk})$;
\item If $a_{jk} \in \A_{jk}$ and $b_{jk} \in \A_{jk}$ then $\varphi(a_{jk}b_{jk})=\varphi(a_{jk})\varphi(b_{jk})$;
\item If $a_{jj} \in \A_{jj}$ and $b_{jk} \in \A_{jk}$ then $\varphi(a_{jj}b_{jk})=\varphi(a_{jj})\varphi(b_{jk})$;
\item If $a_{jk} \in \A_{jk}$ and $b_{kj} \in \A_{kj}$ then $\varphi(a_{jk}b_{kj})=\varphi(a_{jk})\varphi(b_{kj})$.
\end{itemize}
\end{lemma}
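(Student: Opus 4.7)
The plan is to prove each of the four statements by constructing a Lie-type expression $p_{n_*}(a, b, \xi, \ldots, \xi)$ with $\xi \in \{e_1, e_2, 1_\A\}$ whose value, computed by iterated Peirce reduction from rules (i)--(iv), isolates the desired product as a single Peirce summand. The rest is automatic: apply $\varphi$, use hypothesis \eqref{bullet}, the $*$-additivity of $\varphi$ from Theorem \ref{mainthm1}, Lemma \ref{lemadireto} (so $\varphi(\A_{jk}) \subseteq \A'_{jk}$), and Claim \ref{claim4} (so $q_j := \varphi(e_j)$ is a symmetric idempotent in $\A'$), then repeat the identical reduction in $\A'$ with the $q_j$'s in place of the $e_j$'s, and read off the identity by projecting onto the appropriate Peirce component of $\A' = \A'_{11} \oplus \A'_{12} \oplus \A'_{21} \oplus \A'_{22}$.

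For items 1 and 3 the products $a_{jk}b_{kk}$ and $a_{jj}b_{jk}$ lie in the off-diagonal block $\A_{jk}$ ($j \ne k$). Taking $\xi = e_k$, the Peirce rules kill every component of $p_{2_*}(a,b)$ other than the target product $u$ and its $*$-image $u^*$ under two-sided multiplication by $e_k$, and an easy induction gives $p_{n_*}(a, b, e_k, \ldots, e_k) = u - u^*$ for $n \geq 3$. The same computation in $\A'$ produces $U - U^*$ with $U := \varphi(a)\varphi(b)$, and since $\A'_{jk} \cap \A'_{kj} = \{0\}$ the $\A'_{jk}$-projection gives $\varphi(u) = U$.

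For item 2, where $a_{jk}b_{jk} \in \A_{kj}$, I would take $\xi = 1_\A$ and use the identity $p_{n_*}(a, b, 1_\A, \ldots, 1_\A) = 2^{n-3}(p_{2_*}(a,b) - p_{2_*}(a,b)^*)$ valid for $n \geq 3$. The four summands $a_{jk}b_{jk}$, $-b_{jk}a_{jk}^*$, $-b_{jk}^*a_{jk}^*$, $a_{jk}b_{jk}^*$ fall into the Peirce blocks $\A_{kj}$, $\A_{jj}$, $\A_{jk}$, $\A_{jj}$ respectively, so only the $\A_{kj}$-slot contains the target; the analogous expansion in $\A'$ puts $\varphi(a_{jk})\varphi(b_{jk})$ there, and Peirce projection yields the claim.

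Item 4 is the main obstacle. Here $a_{jk}b_{kj} \in \A_{jj}$ and $\varphi(a_{jk})\varphi(b_{kj}) \in \A'_{jj}$; since $\A'_{jj}$ is $*$-invariant, the $\xi = e_j$ reduction $p_{n_*}(a_{jk}, b_{kj}, e_j, \ldots, e_j) = 2^{n-3}(y - y^*)$, with $y := a_{jk}b_{kj}$, delivers only the $*$-antisymmetric equation $\varphi(y) - \varphi(y)^* = Y - Y^*$ (with $Y := \varphi(a_{jk})\varphi(b_{kj})$), and Peirce projection alone cannot separate $\varphi(y)$ from its $*$-image. To supply the missing symmetric equation I would repeat the reduction with $ia_{jk}$ in place of $a_{jk}$: using $(ia_{jk})^* = -ia_{jk}^*$, a parallel induction yields
\[
p_{n_*}(ia_{jk}, b_{kj}, e_j, \ldots, e_j) = 2^{n-3}\, i\, (y + y^*),
\]
and the complex scalar multiplicativity of $\varphi$ then produces $\varphi(y) + \varphi(y)^* = Y + Y^*$. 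Adding the two equations gives $\varphi(y) = Y$, as required. The crucial point, and the main difficulty, is precisely that when the product lies in a diagonal $*$-invariant block, Peirce decomposition alone captures only the $*$-antisymmetric part of the required identity; the $i$-trick, exploiting complex-linearity of $\varphi$, is what provides the symmetric part and closes the argument.
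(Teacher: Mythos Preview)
Your argument is correct and follows essentially the same template as the paper: compute $p_{n_*}(a,b,\xi,\ldots,\xi)$ for a suitable $\xi\in\{e_1,e_2,1_\A\}$, apply hypothesis \eqref{bullet} and Lemma~\ref{lemadireto}, and read off the desired identity by Peirce projection in $\A'$. There are two minor differences worth noting. For the second item you take $\xi=1_\A$, whereas the paper takes $\xi=e_j$; both choices isolate $a_{jk}b_{jk}$ as the sole $\A_{kj}$-component, so the conclusions coincide. For the fourth item the paper simply writes that the remaining cases are ``proved in a similar way,'' while you correctly observe that when the target product $y=a_{jk}b_{kj}$ lies in the $*$-invariant block $\A_{jj}$, the computation with $\xi=e_j$ (or $\xi=1_\A$) yields only the skew relation $\varphi(y)-\varphi(y)^* = Y-Y^*$; your use of $ia_{jk}$ to obtain the symmetric relation $\varphi(y)+\varphi(y)^* = Y+Y^*$ is exactly the device the paper itself employs in Lemma~\ref{lemanovo}, so your treatment of this case is more explicit but in the same spirit. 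One small omission: your reductions are stated for $n\ge 3$; for $n=2$ the claim follows even more directly by projecting $\varphi(p_{2_*}(a,b))=p_{2_*}(\varphi(a),\varphi(b))$ onto the appropriate Peirce component, and you should say so.
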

\begin{proof}
In order to prove the first statement, on the one hand, by Lemma \ref{lemadireto}
$$
\begin{aligned}
\varphi(a_{jk}b_{kk}) - \varphi(a_{jk}b_{kk})^* 
&= \varphi(a_{jk}b_{kk} - (a_{jk}b_{kk})^*)=\varphi(p_{n_*}(a_{jk},b_{kk},e_k,\ldots,e_k)) \\ 
&= p_{n_*}(\varphi(a_{jk}), \varphi(b_{kk}), q_k,\ldots,q_k)\\
&= \varphi(a_{jk})\varphi(b_{kk}) - (\varphi(a_{jk})\varphi(b_{kk}))^*
\end{aligned}
$$
and then $\varphi(a_{jk}b_{kk}) = \varphi(a_{jk})\varphi(b_{kk})$.

Now to prove the second statement, we have
\[
\begin{aligned}
\varphi(a_{jk}b_{jk}) & - \varphi(a_{jk}b_{jk})^*  -2^{n-3} \varphi( b_{jk} a_{jk})^{*} + 2^{n-3}\varphi(b_{jk} a_{jk}^*)^* \\
&= \varphi(a_{jk}b_{jk}) - (a_{jk}b_{jk})^* -2^{n-3}( b_{jk} a_{jk})^{*} + 2^{n-3}(b_{jk} a_{jk}^*)^*) \\
&=\varphi(p_{n_*}(a_{jk},b_{jk},e_j,\ldots,e_j)) = p_{n_*}(\varphi(a_{jk}), \varphi(b_{jk}), q_j,\ldots,q_j)\\
&= \varphi(a_{jk})\varphi(b_{jk}) - \varphi(a_{jk})^{*}\varphi(b_{jk})^* \\
& \ - 2^{n-3} \varphi( b_{jk})^{*} \varphi(a_{jk})^{*} + 2^{n-3}\varphi(b_{jk})^{*} \varphi(a_{jk}^*)^*
\end{aligned}
\]
and then $\varphi(a_{jk}b_{jk}) = \varphi(a_{jk})\varphi(b_{jk})$.


The others statements are proved in a similar way.
\end{proof}

Since alternative algebras are flexible, we have
\[
(x_{kj}, a_{jj}, b_{jj}) + (b_{jj}, a_{jj}, x_{kj})=0,
\]
for all $x_{kj} \in \A_{kj},$ $a_{jj},b_{jj} \in \A_{jj},$ for $k,j \in \{1,2\}.$

\begin{lemma}\label{lemam3}
If $a_{jj},b_{jj} \in \A_{jj}$, with $j \in \left\{1,2\right\}$, then $\varphi(a_{jj}b_{jj}) = \varphi(a_{jj})\varphi(b_{jj})$.
\end{lemma}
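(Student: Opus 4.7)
The plan is to show $D := \varphi(a_{jj}b_{jj}) - \varphi(a_{jj})\varphi(b_{jj}) = 0$ by verifying $(\varphi(e_k)\A')\,D = \{0\}$ for $k \ne j$, and then invoking condition \eqref{clubsuit}. I will work out the case $j=1$, $k=2$; the case $j=2$ is analogous. Note first that $D \in \A'_{11}$ by Lemma \ref{lemadireto} together with the Peirce rule $\A'_{11}\A'_{11}\subseteq\A'_{11}$, so Peirce relations in $\A'$ will do much of the bookkeeping.

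First I would reduce the problem. Given an arbitrary $z' \in \A'$, surjectivity gives $z'=\varphi(z)$, and Lemma \ref{lemanovo} together with additivity of $\varphi$ yields $\varphi(e_2)z' = \varphi(e_2 z) = \varphi(d_{21}) + \varphi(d_{22})$ where $d_{2\ell}\in\A_{2\ell}$ are the Peirce components of $e_2 z$. Since $\varphi(d_{22}) \in \A'_{22}$ and $D \in \A'_{11}$, the product $\varphi(d_{22})D$ vanishes by Peirce rule (iii) in $\A'$. Thus it remains to prove $\varphi(d_{21})\,D = 0$ for every $d_{21}\in\A_{21}$.

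Next I would compute both summands of $\varphi(d_{21})D$. By Lemma \ref{lemam2}, $\varphi(d_{21})\varphi(a_{11}b_{11}) = \varphi(d_{21}(a_{11}b_{11}))$. For the other term, I use alternativity in $\A'$: the associator is alternating, so
\[
(\varphi(d_{21}),\varphi(a_{11}),\varphi(b_{11})) = -(\varphi(b_{11}),\varphi(a_{11}),\varphi(d_{21})),
\]
and the right-hand associator vanishes because $\A'_{11}\A'_{21} = 0$ kills both $(\varphi(b_{11})\varphi(a_{11}))\varphi(d_{21})$ and $\varphi(b_{11})(\varphi(a_{11})\varphi(d_{21}))$. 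Hence $\varphi(d_{21})[\varphi(a_{11})\varphi(b_{11})] = [\varphi(d_{21})\varphi(a_{11})]\varphi(b_{11})$, which by two applications of Lemma \ref{lemam2} equals $\varphi((d_{21}a_{11})b_{11})$.

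Finally I would invoke the flexibility identity displayed immediately before the lemma, $(d_{21},a_{11},b_{11}) + (b_{11},a_{11},d_{21}) = 0$ in $\A$. The second associator vanishes by the Peirce relation $\A_{11}\A_{21}=0$, so $(d_{21}a_{11})b_{11} = d_{21}(a_{11}b_{11})$ in $\A$. Applying $\varphi$ gives $\varphi(d_{21})\varphi(a_{11}b_{11}) = \varphi(d_{21})[\varphi(a_{11})\varphi(b_{11})]$, i.e.\ $\varphi(d_{21})\,D = 0$, completing the reduction and forcing $D=0$ by \eqref{clubsuit}. The main obstacle is handling the potential non-associativity on both sides: in $\A'$ we cannot directly move $\varphi(d_{21})$ across $\varphi(a_{11})\varphi(b_{11})$ without first checking that the associator vanishes, and in $\A$ we must separately verify via flexibility and Peirce that the associator $(d_{21},a_{11},b_{11})$ vanishes so that Lemma \ref{lemam2} can be chained; both vanishings rest on the same Peirce identity $\A_{11}\A_{21}=0$ (and its $\A'$-counterpart).
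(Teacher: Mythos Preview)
Your proof is correct and follows essentially the same route as the paper's: left-multiply $D$ by $\varphi(x_{kj})$ (your $\varphi(d_{21})$), use Lemma~\ref{lemam2} together with the flexibility identity stated just before the lemma (applied in both $\A$ and $\A'$, via the Peirce relation $\A_{jj}\A_{kj}=0$) to see this product vanishes, and then invoke condition~\eqref{clubsuit}. You are simply more explicit than the paper about the reduction from $(\varphi(e_k)\A')D=\{0\}$ to $\varphi(d_{21})D=0$ and about why the relevant associators vanish on each side.
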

\begin{proof}
Let $x_{kj}$ be an element of $\A_{kj}$, with $j \neq k$. Using Lemma \ref{lemam2} we obtain
\begin{align*}
    \varphi(x_{kj})\varphi(a_{jj}b_{jj}) =& \varphi(x_{kj}a_{jj}b_{jj}) = \varphi ((x_{kj}a_{jj})b_{jj}) \\
    =& (\varphi(x_{kj})\varphi(a_{jj}))\varphi(b_{jj}) = \varphi(x_{kj})  (\varphi(a_{jj})\varphi(b_{jj}))\\
\end{align*}
that is,
$$
\varphi(x_{kj})(\varphi(a_{jj}b_{jj}) - \varphi(a_{jj})\varphi(b_{jj})) = 0.
$$
Now, by Lemma \ref{lemadireto}, $\varphi(x_{kj}) \in \A'_{kj}$ as well as $\varphi(a_{jj}b_{jj})$ and $\varphi(a_{jj})\varphi(b_{jj}) \in \A'_{jj}$.
Then, $(\varphi(e_k)\A')(\varphi(a_{jj}b_{jj}) - \varphi(a_{jj})\varphi(b_{jj})) = 0$, which implies that $\varphi(a_{jj}b_{jj}) = \varphi(a_{jj})\varphi(b_{jj})$ by Condition $(\ref{clubsuit})$ of Theorem~\ref{mainthm2}.
\end{proof}

\begin{dem2}
By additivity of $\varphi$ and Lemmas \ref{lemam2} and \ref{lemam3}, it follows  that $\varphi(ab) = \varphi(a)\varphi(b)$, for all $a$, $b \in \A$, this is, $\varphi$ preserves product as required.
\end{dem2}

\section{Corollaries} 

Now we present some consequences of our main results. 

\begin{corollary}   
Let $\A$ and $\A'$ be two alternative $*$-algebras with identities $1_\A$ and $1_{\A'}$, respectively, and $e_1$ and $e_2 = 1_{\A} - e_1$ nontrivial symmetric idempotents in $\A$. Let $\varphi: \A \rightarrow \A'$ be a complex scalar multiplication bijective unital map. Suppose that $\A$ satisfies
\begin{equation*}
    (e_j\A) x = \left\{0\right\} \mbox{ for any }  j \in \{ 1,2 \}\ \ \  \mbox{implies} \ \ \ x = 0.
\end{equation*}
 Even more, suppose that $\A'$ satisfies
\begin{equation*}
    (\varphi(e_j)\A') y = \left\{0\right\} \mbox{ for any }  j \in \{ 1,2 \}\ \ \  \mbox{implies} \ \ \ y = 0.
\end{equation*}
In this conditions, $\varphi: \A \rightarrow \A'$ is a multiplicative $*$-Lie $n$-map if and only if $\varphi$ is $*$-isomorphism.
\end{corollary}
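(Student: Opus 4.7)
The plan is to verify the two implications separately; both will turn out to be essentially immediate.

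For the forward direction, assume $\varphi$ is a multiplicative $*$-Lie $n$-map. By definition, $\varphi(p_{n_*}(x_1,x_2,\ldots,x_n)) = p_{n_*}(\varphi(x_1),\varphi(x_2),\ldots,\varphi(x_n))$ for \emph{all} choices of $x_1,\ldots,x_n \in \A$. In particular, specializing the last $n-2$ slots to any $\xi \in \{e_1,e_2,1_{\A}\}$, the map $\varphi$ satisfies condition \eqref{bullet} of Theorem~\ref{mainthm1}. Together with the assumption that $\varphi$ is a complex scalar multiplication bijective unital map and conditions \eqref{spadesuit} and \eqref{clubsuit} on $\A$ and $\A'$, the hypotheses of Theorem~\ref{mainthm2} are fully met, so $\varphi$ is a $*$-isomorphism.

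For the converse, suppose $\varphi$ is a $*$-isomorphism. Then $\varphi$ is additive, multiplicative and preserves involution. I would prove by induction on $n \geq 2$ that $\varphi(p_{n_*}(x_1,\ldots,x_n)) = p_{n_*}(\varphi(x_1),\ldots,\varphi(x_n))$. For $n=2$, using additivity, multiplicativity and the involution-preserving property,
\[
\varphi(p_{2_*}(a,b)) = \varphi(ab-ba^*) = \varphi(a)\varphi(b) - \varphi(b)\varphi(a)^* = p_{2_*}(\varphi(a),\varphi(b)).
\]
For $n \geq 3$, writing $y = p_{(n-1)_*}(x_1,\ldots,x_{n-1})$ and applying the induction hypothesis together with the $n=2$ case gives
\[
\varphi(p_{n_*}(x_1,\ldots,x_n)) = \varphi([y,x_n]_*) = [\varphi(y),\varphi(x_n)]_* = p_{n_*}(\varphi(x_1),\ldots,\varphi(x_n)),
\]
so $\varphi$ is a multiplicative $*$-Lie $n$-map.

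There is essentially no obstacle here: the forward direction is a direct application of Theorem~\ref{mainthm2}, since the $*$-Lie $n$-map hypothesis is strictly stronger than the restricted version \eqref{bullet} used in that theorem, and the converse is a trivial induction built on the definition of $p_{n_*}$. The real mathematical content of the corollary is entirely absorbed into Theorem~\ref{mainthm2}.
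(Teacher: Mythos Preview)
Your proof is correct and matches the paper's intended approach: the forward direction is an immediate application of Theorem~\ref{mainthm2} (since the full $*$-Lie $n$-map condition specializes to \eqref{bullet}), and the converse is the obvious induction on $n$ using that a $*$-isomorphism preserves products, sums, and involution. The paper itself gives no explicit proof for this corollary, treating it as a direct consequence of Theorem~\ref{mainthm2}, which is exactly what you do.
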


It is easy to see that any prime alternative algebra satisfy Conditions \eqref{spadesuit}  and \eqref{clubsuit}, so we have the following result: 

\begin{corollary} \label{cor 3.2}
Let $\A$ and $\A'$ be two prime alternative $*$-algebras with identities $1_{\A}$ and $1_{\A'}$, respectively, and $e_1$ and $e_2 = 1_{\A} - e_1$ nontrivial symmetric idempotents in $\A$. In this condition, a complex scalar multiplication $\varphi: \A \rightarrow \A'$ is a bijective unital multiplicative $*$-Lie $n$-map if and only if $\varphi$ is $*$-isomorphism.
\end{corollary}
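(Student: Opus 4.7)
The plan is to deduce Corollary \ref{cor 3.2} as a direct consequence of the previous corollary in Section 3. The backward direction is immediate: any $*$-isomorphism $\varphi$ commutes with $p_{n_*}$ by repeated application of $\varphi(ab) = \varphi(a)\varphi(b)$ and $\varphi(a^*) = \varphi(a)^*$, hence is automatically a bijective, unital, complex-scalar-preserving multiplicative $*$-Lie $n$-map. For the forward direction, assuming $\varphi$ is a bijective unital multiplicative $*$-Lie $n$-map that preserves complex scalars, it suffices by the previous corollary to verify Conditions \eqref{spadesuit} and \eqref{clubsuit}; I will show that primeness of $\A$ (respectively $\A'$) implies each of them automatically.

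For \eqref{spadesuit}: suppose $(e_j \A) x = \{0\}$ for some $j \in \{1,2\}$. Taking $a = e_j$ immediately gives $e_j x = 0$. For a general $a \in \A$, I plan to exploit the skew-symmetry of the associator in alternative algebras together with the nuclear-like relations $(e_k a) e_j = e_k (a e_j)$ recalled in the Peirce decomposition, to rewrite $(e_j a) x$ as $e_j (a x)$ plus terms that vanish once $e_j x = 0$ is known. This will reshape the hypothesis into $e_j \A x = \{0\}$, and the element-wise primeness of $\A$ combined with the nontriviality of $e_j$ forces $x = 0$. Condition \eqref{clubsuit} is then obtained by running the same argument inside $\A'$, once one notes that $q_j := \varphi(e_j)$ is a (symmetric) idempotent of $\A'$ by Claim \ref{claim4}, and is nonzero because $\varphi$ is bijective with $\varphi(0) = 0$ by Claim \ref{claim3}.

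The main obstacle I anticipate is the associator bookkeeping in the first step: element-wise primeness is naturally phrased in terms of a two-sided product $a \A b$, whereas the hypothesis \eqref{spadesuit} involves the one-sided expression $(e_j \A) x$, and the non-associativity of $\A$ means the identification of these two quantities is not automatic. Once that bridge is established, assembly with the previous corollary and the trivial converse is purely formal.
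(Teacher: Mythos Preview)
Your plan coincides with the paper's one-line argument: verify that primeness forces \eqref{spadesuit} and \eqref{clubsuit}, then invoke the preceding corollary (the converse being trivial). The bracketing obstacle you anticipate does not actually arise under the paper's conventions---primeness is stated elementwise as ``$a\A b=0\Rightarrow a=0$ or $b=0$'', and with the natural left-to-right reading $a\A b=(a\A)b$ this is literally \eqref{spadesuit} with $a=e_j\neq 0$ (and likewise \eqref{clubsuit} once you know $\varphi(e_j)\neq 0$); so no associator manipulation is needed, and in fact the route you sketch, using $(e_j,a,x)=-(e_j,x,a)$ together with $e_jx=0$, only yields $e_j(ax+xa)=0$, which is weaker than $e_j(\A x)=0$.
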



To finish we will give an application of the Corollary~\ref{cor 3.2}. A complete normed alternative complex $*$-algebra $A$ is called  an {\it alternative
$C^{*}$-algebra}  if it satisfies the condition: $\left\|a^{*}a\right\| = \left\|a\right\|^2$, for all elements $a \in A$. Alternative $C^{*}$-algebras are non-associative generalizations of
$C^{*}$-algebras and appear in various areas in Mathematics (see more details in the references \cite{Miguel1} and \cite{Miguel2}). 
An alternative $C^{*}$-algebra $A$ is called an 
{\it alternative $W^{*}$-algebra}
if it is a dual Banach space and a prime alternative $W^{*}$-algebra is called
{\it alternative $W^{*}$-factor}. It is well known that non-zero alternative $W^{*}$-algebras are unital.

\begin{corollary}
Let $\A$ and $\A'$ be two alternative $W^{*}$-factors. In this condition, a complex scalar multiplication $\varphi: \A \rightarrow \A'$  is a bijective unital multiplicative $*$-Lie $n$-map if and only if $\varphi$ is $*$-isomorphism.
\end{corollary}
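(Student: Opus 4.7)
The plan is to obtain this corollary as an immediate consequence of Corollary~\ref{cor 3.2}. What I would do is verify that every alternative $W^{*}$-factor satisfies the three hypotheses of that corollary: being a prime alternative $*$-algebra, being unital, and admitting a nontrivial symmetric idempotent $e_1$ so that the Peirce splitting $e_2 = 1_{\A} - e_1$ is available.

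The first two points are essentially bookkeeping. Primeness is built into the definition of an alternative $W^{*}$-factor, and the paragraph preceding the corollary explicitly records that non-zero alternative $W^{*}$-algebras are unital, which supplies the identities $1_{\A}$ and $1_{\A'}$ needed in Corollary~\ref{cor 3.2}. What remains is to extract the nontrivial symmetric idempotent inside each factor. For this I would appeal to the structure theory developed in \cite{Miguel1, Miguel2}: an alternative $W^{*}$-factor distinct from the scalar field is a dual Banach $*$-algebra with a rich lattice of self-adjoint projections, and any such nontrivial self-adjoint projection $e_1$ (equivalently, a symmetric idempotent with $0 \neq e_1 \neq 1_{\A}$) furnishes the datum required by Corollary~\ref{cor 3.2}.

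Once these three hypotheses are in place, Corollary~\ref{cor 3.2} applies verbatim: a complex scalar multiplicative, bijective, unital map $\varphi : \A \to \A'$ is a multiplicative $*$-Lie $n$-map if and only if $\varphi$ is a $*$-isomorphism. The converse direction (a $*$-isomorphism is automatically a multiplicative $*$-Lie $n$-map) is trivial from the definitions of $[\cdot,\cdot]_{*}$ and of the iterated polynomials $p_{n_{*}}$, so no additional argument is required there.

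The only non-bureaucratic point, and hence the main obstacle, is guaranteeing the existence of the nontrivial symmetric idempotent $e_1$ in an alternative $W^{*}$-factor. In the associative $W^{*}$-factor case this is routine via spectral theory, and the alternative analogue is known from the structural results on alternative $W^{*}$-algebras cited in the paper; once that fact is acknowledged, the corollary is a direct quotation of Corollary~\ref{cor 3.2}.
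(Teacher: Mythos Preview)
Your proposal is correct and mirrors the paper's own treatment: the paper states this corollary without proof, relying implicitly on Corollary~\ref{cor 3.2} together with the remarks that alternative $W^{*}$-factors are prime by definition and unital by the sentence immediately preceding the statement. If anything, you are more careful than the paper in isolating the one nontrivial hypothesis (existence of a nontrivial symmetric idempotent) and pointing to the structural references \cite{Miguel1, Miguel2} for it.
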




\end{document}